\newtheorem{theorem}{Theorem}
\newtheorem{remark}{Remark}
\newtheorem{lemma}{Lemma}
\newtheorem{definition}{Definition}
\let\oldtodo\todo
\renewcommand{\todo}[1]{\oldtodo{\footnotesize{#1}}}
\title[Decentralized Observability with Limited Communication between Sensors]{Decentralized Observability with \\Limited Communication between Sensors}
\author[A. B. Alexandru, S. Pequito, A. Jadbabaie, G. J. Pappas]{Andreea B. Alexandru $^{\dag}$ $\quad$ S\'ergio Pequito $^{\dag}$ $\quad$ Ali Jadbabaie  $^{\ddagger}$ \\George J. Pappas $^{\dag}$}
 \thanks{This work was supported in part by the TerraSwarm Research Center, one of six centers supported by the STARnet phase of the Focus Center Research Program (FCRP) a Semiconductor Research Corporation program sponsored by MARCO and DARPA}
\thanks{
$^{\dag}$Department of Electrical and Systems Engineering, School of Engineering and Applied Science,
\mbox{University of Pennsylvania}
\newline \indent
$^{\ddagger}$Institute for Data, Systems, and Society, Massachusetts Institute of Technology}
\date{} % delete this line to display the current date
\begin{document}

\begin{abstract}
In this paper, we study the problem of jointly retrieving the state of a dynamical system, as well as the state of the sensors deployed to estimate it. We assume that the sensors possess a simple computational unit that is capable of performing simple operations, such as retaining the current state and model of the system in its memory. 

We assume the system to be observable (given all the measurements of the sensors), and we ask whether each sub-collection of sensors can retrieve the state of the underlying physical system, as well as the state of the remaining sensors. To this end, we consider communication between neighboring sensors, whose adjacency is captured by a communication graph. We then propose a linear update strategy that encodes the sensor measurements as states in an augmented state space, with which we provide the solution to the problem of retrieving the system and sensor states.

%The dynamics of the augmented state space are the physical system's dynamics, as well as those induced by the communication between the sensors.
%
%In this paper, we study the problem of jointly retrieving the state of a dynamical system, as well as the state of the sensors deployed to estimate it, in a distributed-decentralized fashion. To this end, we assume that the sensors possess a simple computational unit that is capable of performing simple operations, such as retaining the current state and model of the system in its memory. Furthermore, the sensors can communicate with neighboring sensors, whose adjacency is captured by a communication graph, in order to augment dynamic properties of the system as follows. A linear update strategy is considered by sensors such that each subcollection of sensors can retrieve the state of the underlying physical system, as well as the state of the remaining sensors. 
%
%We propose a distributed-decentralized strategy that encodes the sensor measurements as states in an augmented state space that considers the physical system's dynamics and the dynamics induced by the communication between the sensors.

%The proposed approach is based on structural properties along with graph theoretic tools that are employed for the characterization of the system. The only assumption is that the structure of the system's graph and structure of communication graph are available.

The present paper contains three main contributions. First, we provide necessary and sufficient conditions to ensure observability of the system and sensor states from any sensor. Second, we address the problem of adding communication between sensors when the necessary and sufficient conditions are not satisfied, and devise a strategy to this end. Third, we extend the former case to include different costs of communication between sensors. Finally, the concepts defined and the method proposed are used to assess the state of an example of approximate structural brain dynamics through linearized measurements.

%
%{\color{red}The concepts defined and the method proposed are employed on a practical/theoretical application.}
\end{abstract}

\maketitle
%\tableofcontents

%%%%%%%%%%%%%%%%%%%%%%%%%%%%%%%%%%%%%%%%%%%%%%%%%%%%%%%%%%%%%%%%%%%%%%%%%%%%%%%%

%%%%%%%%%%%%%%%%%%%%%%%%%%%%%%%%%%%%%%%%%%%%%%%%%%%%%%%%%%%%%%%%%%%%%%%%%%%%%%%%
\section{INTRODUCTION}

In the last decade, a significant effort was placed in developing strategies that enable the recovery of the system state, i.e., the problem of \emph{estimating} the system's state. The applications of such mechanisms are essential and include, among others, the monitoring of the electric power grid, process control, swarms of robots, and social networks~\cite{GuptaStateEstPowerSys,interconnectedDyn,Garin2010}. Furthermore, being able to retrieve the state of the system enables the assessment of the overall behavior of the plant, and allows us to design control strategies that enable the proper control of the system. Such control strategies can either steer the system to a specific target, as in a swarm of robots trying to keep a formation~\cite{MesbahiEgerstedt}, or stabilize the system, as in the case of the electric power grid whose frequency should be kept within a given range~\cite{ilic1998power}.

The estimation strategies can be broadly classified into \emph{centralized}, \emph{decentralized} and \emph{distributed}. In centralized schemes, it is assumed that all the data collected by the sensors deployed in the plant is available to a central entity that, together with the system's model, performs the estimation of the system's state. Observability of the system is commonly sought to ensure that it is possible to design stable estimators~\cite{Luenberger66}. Whereas the system might be observable when all sensors deployed in the system are considered, the same does not necessarily hold true when only a sub-collection of the sensors is considered. As  a consequence, we need to enhance the classical schemes with strategies that only consider a subset of sensors, as in the case of decentralized estimation~\cite{IKEDA83}. Moreover, we need to add communication between different sensor locations to obtain additional information about the system state, as in distributed estimation~\cite{Polychronopoulos}. For instance, the sensors can average either the estimates of the state obtained by the different sensors~\cite{khanPhd} or the innovations~\cite{DasMoura}, i.e, the residual between predicted and measured output, which usually leads to a smaller amount of information exchanged between sensors. 

In this paper, we propose a \emph{\mbox{distributed-decentralized}} scheme, i.e., a decentralized estimation in the sense previously defined, which resorts to communication in a similar fashion as distributed scenarios. Although it combines elements from both decentralized and distributed approaches, the \mbox{distributed-decentralized} approach is distinct from them in the following sense: it encodes the sensor measurements as states in an augmented state space that considers the physical system's dynamics and the dynamics induced by the communication between the sensors. Consequently, the sensors only need to share their state instead of estimates of the system's state or innovations. Moreover, the measured output of each sensor includes the measurements performed over the system's states, as well as the states of the remaining sensors it communicates with. 

%Therefore, the actuators deployed in the system can respond as a function of the system's state, as well as the sensor's states, hence, anticipating the control input of the remaining actuators deployed in the network, which will ultimately improve the overall system's performance. 
This paper was developed in the context of large distributed systems with wireless sensors, where it is more convenient to add communication links rather than terminals to the system. Our method is also a \mbox{one-time} offline step, so that reaching consensus online in the context of distributed estimation is avoided, and the problem of not finishing computations before the following time step is bypassed.

\subsection*{Related Work}

Distributed and decentralized estimation approaches have been an active research topic in the last decade. Some of the most recent developments are overviewed, for instance, in~\cite{guptaPhd,karPhd,khanPhd,Subbotin}. 
More specifically, the problem of designing communication networks to solve dynamic estimation problems has been previously addressed in~\cite{Mohammadreza}, where sufficient conditions are provided in terms of the communication structure and classification of the different agents (sensors/state variables) in the system. 

 In \cite{PequitoC8,PequitoJ5}, the topology of a static network of sensors is designed in order to minimize the transmission cost among sensors and from the sensors to a central authority, allowing centralized field reconstruction. More recently,  \cite{PequitoECC14} addresses the problem of determining the minimum communication topologies to ensure observability of a multi-agent's network, given a potential communication graph. In~\cite{KhanJadbabaie}, some strategies based in consensus-like methods for distributed estimation are provided by resorting to structural systems theory. In addition,~\cite{Pajic} addresses the stabilization of a wireless control network with strategies from structural systems theory applied to augmented state systems.
 
The main contributions of this paper are threefold. First, we provide necessary and sufficient conditions to ensure \mbox{distributed-decentralized} observability. Second, we devise a method that guarantees observability when the necessary and sufficient conditions are not satisfied, by adding communication links between sensors. This implies providing a constructive algorithm that solves a maximum matching minimum cost problem. Third, we extend the former case to include different costs of communication between sensors.

The rest of the  paper is organized as follows. In Section~\ref{probStatement}, we provide the formal problem statement. Section~\ref{prelim} reviews graph theoretical concepts used in structural systems theory. Section~\ref{mainresults}  presents the main technical results, and discusses the computational complexity of the strategies proposed. Furthermore, the case study in Section \ref{illustrativeexample} illustrates the main results in the context of the brain dynamics. Conclusions and discussions on further research are presented in Section \ref{conclusions}.

\section{PROBLEM STATEMENT}\label{probStatement}

Consider a linear time-invariant system described by 
\begin{equation}
\mathbf x[k+1]=\mathbf A\mathbf x[k], \quad k=0,1,\ldots
\label{dynamics}
\end{equation}
where $x\in\mathbb{R}^{n\times 1}$ is the system's state, and $A\in\mathbb{R}^{n\times n}$ the matrix that determines the autonomous dynamics of the system. In addition, consider $m$ sensors deployed, whose measurements are described by
\begin{equation}
y_i[k]=\mathbf c_i \mathbf x[k], \quad i=1,\ldots,m,
\label{output}
\end{equation}
where $y_i\in \mathbb{R}$ is the measured output, and $\mathbf c_i\in\mathbb{R}^{1\times n}$ the output vector that encodes the linear combination of the states measured by the sensor $i$. 

In addition, we consider that \eqref{dynamics}-\eqref{output} is observable, but perhaps not when only some subset of sensors is considered, so that decentralized estimation is not guaranteed. Each sensor is equipped with a computational unit that is capable of performing elementary operations: it contains enough memory to retain the state estimates of the system and of the sensors, and it is capable of communicating with other sensors. These assumptions are common (although sometimes implicit) in distributed estimation.

Let $\mathcal G=(\mathcal V,\mathcal E)$ be the directed \emph{communication graph} that encodes the interactions between sensors, where $\mathcal V=\{1,\ldots,m\}$ identify the $m$ sensors described in~\eqref{output}, and an edge $(i,j)\in\mathcal E$, which we refer to as communication links, if sensor $j$ transmits to sensor $i$. Notice that we do not require the communication to be reciprocal between sensors. We consider that each sensor possesses a scalar state $z_i$, with $i=1,\ldots, m$, and  its evolution over time is assumed to be described as a linear combination of its previous state, the measured output of the system and the incoming states from neighboring sensors, i.e.,
\begin{equation}
z_i[k+1]=w_{ii}z_i[k]+y_i[k]+\sum_{j\in\mathbb N_i^-}w_{ij}z_j[k], \ i\in\mathcal{V},
\label{dynSensors}
\end{equation}
where $\mathbb N_i^-=\{j\in\mathcal{V}: (i,j)\in \mathcal E\}$ are the indices of the \mbox{in-neighbors} of sensor $i$ given by the communication graph~$\mathcal G$. 

Therefore, the overall dynamics described by~\eqref{dynamics}-\eqref{dynSensors} can be re-written as a linear augmented system:
\begin{equation}
\tilde{\mathbf x}[k+1]=\underbrace{\left[\begin{array}{cc}\mathbf A & \mathbf{0}_{n\times m}\\ \mathbf C & \mathbf W(\mathcal G) \end{array}\right]}_{\tilde{\mathbf A}(\mathcal G)}\tilde{\mathbf x}[k],
\label{dynAugmented}
\end{equation}
where $\tilde{\mathbf x}=[x_1\ \ldots\  x_n \ z_1 \ldots z_m]^\intercal$ is the augmented system's state, $\mathbf C=[\mathbf c_1^\intercal\ \mathbf c_2^\intercal \ \ldots \ \mathbf c_m^\intercal]^\intercal$ the measured output and $\mathbf W(\mathcal G)$ the dynamics between sensors induced by the communication graph, i.e., $[\mathbf W(\mathcal G)]_{ij}=w_{ij}$ when $(i,j)\in \mathcal E$ and zero otherwise. We consider that each sensor has access to its own state, thus, $\mathbf W(\mathcal G)$ has a non-zero diagonal. Subsequently, the output measured by the sensors is given by~\eqref{output} and the incoming states from the neighboring sensors, since the average rule in~\eqref{dynSensors} is performed at the sensor level, i.e., in its computational unit. Thus, the measured output for the augmented system is given by
\begin{equation}
\tilde y_i[k]=\underbrace{\left[\begin{array}{cc} -\  \mathbf c_i\  - & \mathbf{0}_{1\times m}\\ 
\mathbf{0}_{|\mathbb N_i^-|\times n} &\mathbb{I}_{m}^{\mathbb N_i^-}\end{array}\right]}_{\tilde{\mathbf C}_i} \tilde{\mathbf x}[k], \ i\in\mathcal V,
\label{outputAugmented}
\end{equation}
where $\mathbb{I}_{m}^{\mathcal J}$ is the sub-matrix containing the rows of the $m\times m$ identity matrix with indices in $\mathcal J\subset\{1,\ldots,m\}$.

In this setup, we aim to ensure that each sensor's computational unit is capable of retrieving the state of the augmented system. In other words,~\eqref{dynAugmented}-\eqref{outputAugmented} (or, equivalently $(\tilde{\mathbf A}(\mathcal G),\tilde{\mathbf C}_i)$), is observable for $i=1,\ldots,m$. Subsequently, we aim to address the following three related problems.

\emph{Problem~1} Determine the necessary and sufficient conditions that $\mathbf W(\mathcal G)$ must satisfy to ensure observability of $(\tilde{\mathbf A}(\mathcal G),\tilde{\mathbf C}_i)$ for $i=1,\ldots,m$. \hfill $\circ$

%Nonetheless, the communication graph available may not attain the necessary and sufficient conditions in \emph{Problem~1}. Therefore, we pose the following design question.

\emph{Problem~2} If the necessary and sufficient conditions to attain observability of $(\tilde{\mathbf A}(\mathcal G),\tilde{\mathbf C}_i)$ for $i=1,\ldots,m$ do not hold, then determine the minimum number of additional communication links which yield such conditions. \hfill $\circ$

In practice, the sensors might be deployed at varying distances from one another, making it more convenient to add some communication links instead of others. In other words, we can associate costs to the potential communication links from sensor $j$ to sensor $i$, which we denote by $\gamma_{ij}$, which is zero if the communication link already exists. Thus, we consider we are given the cost of setting a communication link between any two sensors and we pose the following problem:

\emph{Problem~3} If the necessary and sufficient conditions to attain observability of $(\tilde{\mathbf A}(\mathcal G),\tilde{\mathbf C}_i)$ for $i=1,\ldots,m$ do not hold, then determine the minimum number of additional communication such that the cost is minimized and observability holds. \hfill $\circ$

\section{PRELIMINARIES AND TERMINOLOGY}\label{prelim}

The following standard terminology and notions from structural systems theory and graph theory can be found, for instance, in \cite{PequitoJournal}. 
Structural systems deal with the sparsity (i.e., location of zeroes and non-zeroes) patterns of matrices, rather than with the numerical values of the elements. 
%Applications in which the specific numerical values have high uncertainties, are parametrized, or are just not available, are very common in practice, which makes this field relevant. 
Let $\bar{\mathbf A}\in \{0,*\}^{n\times n}$ be the matrix that represents the structural pattern of $\mathbf A$ with the following encoding: if $\bar{\mathbf A}_{ij}=0$, then $\mathbf A_{ij} = 0$ and if $\bar{\mathbf A}_{ij} = *$, where $*$ is an arbitrary non-specified value $*$, then $\mathbf A_{ij}$ can take any value. Following the sparsity pattern, we associate structural matrices to every matrix in~\eqref{dynamics}-\eqref{output},~\eqref{dynAugmented}-\eqref{outputAugmented}, that will be employed further. 

A pair $(\bar{\mathbf A},\bar{\mathbf C})$ is structurally observable if and only if there exists an observable pair $(\mathbf A,\mathbf C)$ with the same sparseness as $(\bar{\mathbf A},\bar{\mathbf C})$. Moreover, given a structurally observable pair $(\bar{\mathbf A},\bar{\mathbf C})$, then \emph{almost all} pairs of real matrices $(\mathbf A,\mathbf C)$ with the same structure as $(\bar{\mathbf A},\bar{\mathbf C})$ are observable~\cite{Shields_Pearson:1976}.

Let $\mathcal D(\bar{\mathbf A})=(\mathcal X,\mathcal E_{\mathcal X,\mathcal X})$ be the digraph representation of $\bar{\mathbf A}$,  to be referred to as the \emph{state digraph},  where the vertex set $\mathcal X$ represents the set of state variables (also referred to as state vertices) and $\mathcal E_{\mathcal X,\mathcal X}=\{(x_i,x_j): \ \bar{\mathbf A}_{ji}\neq 0\}$ denotes the set of edges.  Similarly, we define the \emph{state-output digraph} $\mathcal D(\bar{\mathbf A},\bar{\mathbf C})=(\mathcal X\cup \mathcal Y,\mathcal E_{\mathcal X,\mathcal X}\cup\mathcal E_{\mathcal X,\mathcal Y})$, where $\mathcal Y$ represents the set of output vertices and $\mathcal E_{\mathcal X,\mathcal Y}=\{(x_j,y_i):\ \bar{\mathbf C}_{ji}\neq 0\}$.

A digraph $\mathcal{D}=(\mathcal V,\mathcal E)$ is \emph{strongly connected} if there exists a directed path between any  pair of vertices, i.e., a sequence of edges that starts and ends in those vertices. A sub-graph $\mathcal D_s=(\mathcal V_s,\mathcal E_s)$, with $\mathcal V_s\subset \mathcal V$ and $\mathcal E_s\subset \mathcal E$, is a \emph{strongly connected component (SCC)} if between any two vertices in $\mathcal V_s$ there exists a directed path, and $\mathcal D_s$ is maximal among such subgraphs. Visualizing each SCC as a virtual node (or supernode), one may generate a \textit{directed acyclic graph} (DAG), in which each node corresponds to a single SCC and a directed edge exists between two SCCs \emph{if and only if} there exists a directed edge connecting vertices in the SCCs in the original digraph. In the DAG representation, if the SCC does not have an edge from any of  its states to the states of another SCC, then it is referred to  as \emph{sink SCC}. Similarly, an SCC that does not have any incoming edge from other SCC is called a \emph{source SCC}.

For any two vertex sets $\mathcal S_{1}, \mathcal S_{2}\subset \mathcal V$, we define the \textit{bipartite graph} $\mathcal{B}(\mathcal S_1,\mathcal S_2,\mathcal E_{\mathcal S_1,\mathcal S_2})$ associated with $\mathcal D=(\mathcal V,\mathcal E)$, to be a directed graph, whose vertex set is given by $\mathcal S_{1}\cup \mathcal S_{2}$ and the edge set $\mathcal E_{\mathcal S_1,\mathcal S_2}$ by $ \mathcal E_{\mathcal S_1,\mathcal S_2}=\{(s_1,s_2)\in \mathcal E \ :\ s_1 \in \mathcal S_1, s_2 \in \mathcal S_2 \ \}$. 
Further, we refer to the bipartite graph $\mathcal B(\bar{\mathbf A})\equiv \mathcal B(\mathcal X,\mathcal X,\mathcal E_{\mathcal X,\mathcal X})$ as the \emph{state bipartite graph}, and to the $\mathcal B(\bar{\mathbf A},\bar{\mathbf C})\equiv \mathcal B(\mathcal X,\mathcal X\cup\mathcal Y,\mathcal E_{\mathcal X,\mathcal X\cup \mathcal Y})$ as the \emph{state-output bipartite graph}. Given $\mathcal{B}(\mathcal S_1,\mathcal S_2,\mathcal E_{\mathcal S_1,\mathcal S_2})$, a matching $M$ corresponds to a subset of edges in $\mathcal E_{\mathcal S_1,\mathcal S_2}$ that do not share vertices. In addition, we refer to the edges in a maximum matching as \emph{matching edges}. A maximum matching $M^{\ast}$ may then be defined as a matching $M$ that has the largest number of edges among all possible matchings. Such a matching decomposes the digraph into a disjoint set of cycles and elementary paths. The term \emph{\mbox{left-unmatched} vertices} (with respect to a maximum matching $M^{\ast}$ associated to $\mathcal{B}(\mathcal S_1,\mathcal S_2,\mathcal E_{\mathcal S_1,\mathcal S_2})$) refers to those vertices in $\mathcal S_{1}$ that do not have an outcoming matching edge in $M^{\ast}$.  For simplicity, we often just say \emph{a set of \mbox{left-unmatched} vertices}, omitting the explicit reference to the maximum matching when the context is clear. On the contrary, if we need to emphasize that the set of \mbox{left-unmatched} vertices is associated with a specific maximum matching  $M$ of the state bipartite graph, then we make the dependency explicit by $\mathcal U_L(M)$. Given a vector of weights $w$ associated to the edges in $\mathcal E$, we define a \emph{weighted matching} as the matching with weights corresponding to its constituting edges. Moreover, a \emph{minimum cost maximum matching (MCMM)} is a maximum matching $M^{\ast}$ whose edges achieve the minimum cost among all maximum matchings. Finally, we notice that there may exist several sets of \mbox{left-unmatched} vertices, since the maximum matching $M^{\ast}$ may not be unique. 

 \begin{center}
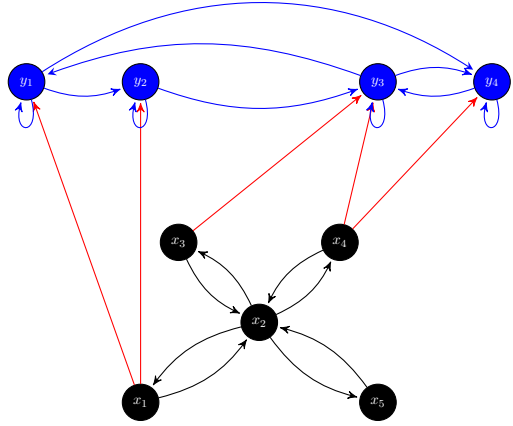
\begin{figure}
\begin{tikzpicture}[>=stealth',shorten >=1pt,node distance=1.5cm,on grid,initial/.style    ={}]

  \node[state,scale=0.5,fill]          (x2)                        {\textcolor{white}{$x_2$}};
  \node[state,scale=0.5,fill]          (x3) [above left =of x2]    {\textcolor{white}{$x_3$}};
  \node[state,scale=0.5,fill]          (x1) [below left =of x2,xshift=-1cm]    {\textcolor{white}{$x_1$}};
  \node[state,scale=0.5,fill]          (x4) [above right =of x2]    {\textcolor{white}{$x_4$}};
  \node[state,scale=0.5,fill]          (x5) [below right =of x2,xshift=1cm]    {\textcolor{white}{$x_5$}};
  \node[state,scale=0.5,fill=blue]          (y2) [above =of x1,yshift=5.5cm]    {\textcolor{white}{$y_2$}};
  \node[state,scale=0.5,fill=blue]          (y1) [left =of y2]    {\textcolor{white}{$y_1$}};
  \node[state,scale=0.5,fill=blue]          (y3) [above =of x5,yshift=5.5cm]    {\textcolor{white}{$y_3$}};
  \node[state,scale=0.5,fill=blue]          (y4) [right =of y3]    {\textcolor{white}{$y_4$}}; 
\tikzset{mystyle/.style={->,red}} 
\path (x1)     edge [mystyle]    node   {} (y1)
	(x1)     edge [mystyle]    node   {} (y2)
	(x3)     edge [mystyle]    node   {} (y3)
	(x4)     edge [mystyle]    node   {} (y3)
	(x4)     edge [mystyle]    node   {} (y4);

%\tikzset{mystyle/.style={->,relative=false,in=0,out=20}}
%\path (x1)     edge [mystyle]    node   {} (x2)
%      (x3)     edge [mystyle]    node   {} (x2)
%      (x4)     edge [mystyle]   node   {} (x2) 
%      (x5)     edge [mystyle]   node   {} (x2);
%      
%\tikzset{mystyle/.style={->,relative=false,in=0,out=0}}
%\path (x2)     edge [mystyle]    node   {} (x1) 
%      (x2)     edge [mystyle]    node   {} (x3)
%      (x2)     edge [mystyle]   node   {} (x3)
%      (x2)     edge [mystyle]   node   {} (x4)
%      (x2)     edge [mystyle]   node   {} (x5);

  \path[->]          (x1)  edge   [bend right=20]   node {} (x2);
  \path[->]          (x2)  edge   [bend right=20]   node {} (x1);
  \path[->]          (x3)  edge   [bend right=20]   node {} (x2);
  \path[->]          (x2)  edge   [bend right=20]   node {} (x3);
  \path[->]          (x4)  edge   [bend right=20]   node {} (x2);
  \path[->]          (x2)  edge   [bend right=20]   node {} (x4);
  \path[->]          (x5)  edge   [bend right=20]   node {} (x2);
  \path[->]          (x2)  edge   [bend right=20]   node {} (x5);
  \path[->,blue]          (y1)  edge   [bend right=20]   node {} (y2);
  \path[->,blue]          (y2)  edge   [bend right=20]   node {} (y3);
  \path[->,blue, -stealth, shorten >=.5mm]          (y3)  edge   [bend right=20]   node {} (y1);
  \path[->,blue]          (y3)  edge   [bend left=20]   node {} (y4);
  \path[->,blue]          (y4)  edge   [bend left=20]   node {} (y3);
  \path[->,blue, -stealth, shorten >=.5mm]          (y1)  edge   [bend left=33]   node {} (y4);
  \path[->,blue]          (y1)  edge   [loop below]   node {} (y1);
  \path[->,blue]          (y2)  edge   [loop below]    node {} (y2);
  \path[->,blue]          (y3)  edge   [loop below]    node {} (y3);
  \path[->,blue]          (y4)  edge   [loop below]    node {} (y4);
      
\end{tikzpicture}
  \caption{Plant with 5 state nodes (black) and 4 sensors nodes deployed (blue). The interconnections between the state nodes are depicted in black, the communication links 
  between the sensor nodes are depicted in blue, and the measurement terminals, between the state nodes and the sensor nodes, are depicted in red.}
  \label{systemExample}
\end{figure}
\end{center}

Figure~\ref{systemExample} provides an example of system that is observable from the whole set of sensors, but is not observable from any subset of sensors. More specifically, the system is not observable from sensors $1$ and $2$.  

The structural matrices of the system are the following:

 \[
\mathbf A =\left[
\begin{matrix}
0 & 1 & 0 & 0 & 0\\
1 & 0 & 1 & 1 & 1\\
0 & 1 & 0 & 0 & 0\\
0 & 1 & 0 & 0 & 0\\
0 & 1 & 0 & 0 & 0
\end{matrix}
\right],
\mathbf C =\left[
\begin{matrix}
1 & 0 & 0 & 0 & 0\\
1 & 0 & 0 & 0 & 0\\
0 & 0 & 1 & 1 & 0\\
0 & 0 & 0 & 1 & 0
\end{matrix}
\right],
\mathbf W(\mathcal{G}) =\left[
\begin{matrix}
1 & 0 & 1 & 0\\
1 & 1 & 0 & 0\\
0 & 1 & 1 & 1\\
1 & 0 & 1 & 1
\end{matrix}
\right].
\]

\begin{figure}
\begin{tikzpicture}[>=stealth',shorten >=1pt,node distance=0.5cm,on grid,initial/.style    ={}]

  \node[state,scale=0.3,fill]          (x1)                        {\textcolor{white}{$x_1$}};
  \node[state,scale=0.3,fill]          (x2) [below =of x1]    {\textcolor{white}{$x_2$}};
  \node[state,scale=0.3,fill]          (x3) [below =of x2]   {\textcolor{white}{$x_3$}};
  \node[state,scale=0.3,fill]          (x4) [below =of x3]    {\textcolor{white}{$x_4$}};
  \node[state,scale=0.3,fill]          (x5) [below =of x4]    {\textcolor{white}{$x_5$}};
  \node[state,scale=0.3,fill]          (z1) [below =of x5]    {\textcolor{white}{$z_1$}};
  \node[state,scale=0.3,fill]          (z2) [below =of z1]    {\textcolor{white}{$z_2$}};
  \node[state,scale=0.3,fill]          (z3) [below =of z2]    {\textcolor{white}{$z_3$}};
  \node[state,scale=0.3,fill]          (z4) [below =of z3]    {\textcolor{white}{$z_4$}}; 
  
  \node[state,scale=0.3,fill]          (x12) [right =of x1,xshift=6cm]  {\textcolor{white}{$x_1$}};
  \node[state,scale=0.3,fill]          (x22) [below =of x12]    {\textcolor{white}{$x_2$}};
  \node[state,scale=0.3,fill]          (x32) [below =of x22]   {\textcolor{white}{$x_3$}};
  \node[state,scale=0.3,fill]          (x42) [below =of x32]    {\textcolor{white}{$x_4$}};
  \node[state,scale=0.3,fill]          (x52) [below =of x42]    {\textcolor{white}{$x_5$}};
  \node[state,scale=0.3,fill]          (z12) [below =of x52]    {\textcolor{white}{$z_1$}};
  \node[state,scale=0.3,fill]          (z22) [below =of z12]    {\textcolor{white}{$z_2$}};
  \node[state,scale=0.3,fill]          (z32) [below =of z22]    {\textcolor{white}{$z_3$}};
  \node[state,scale=0.3,fill]          (z42) [below =of z32]    {\textcolor{white}{$z_4$}}; 
  
\tikzset{mystyle/.style={}} 
\path (x1)     edge [mystyle]    node   {} (x22)
	(x2)     edge [mystyle]    node   {} (x12)
	(x2)     edge [mystyle]    node   {} (x32)
	(x2)     edge [mystyle]    node   {} (x42)
        (x3)     edge [mystyle]    node   {} (x22)
	(x4)     edge [mystyle]    node   {} (x22)
	(x1)     edge [mystyle]    node   {} (z22)
	(x4)     edge [mystyle]    node   {} (z32)
	(z1)     edge [mystyle]    node   {} (z12)
	(z1)     edge [mystyle]    node   {} (z22)
	(z1)     edge [mystyle]    node   {} (z42)
	(z2)     edge [mystyle]    node   {} (z32)
	(z3)     edge [mystyle]    node   {} (z12)
	(z3)     edge [mystyle]    node   {} (z32)
	(z3)     edge [mystyle]    node   {} (z42)
	(z4)     edge [mystyle]    node   {} (z32)
	(z4)     edge [mystyle]    node   {} (z42);
	
\tikzset{mystyle/.style={red}} 
\path (x4)     edge [mystyle]    node   {} (z42)
	(x5)     edge [mystyle]    node   {} (x22)
	(x2)     edge [mystyle]    node   {} (x52)
	(x3)     edge [mystyle]    node   {} (z32)
	(x1)     edge [mystyle]    node   {} (z12)
	(z2)     edge [mystyle]    node   {} (z22);
	
\end{tikzpicture}
  \caption{Example of maximum matching in $\mathcal B(\tilde{\mathbf A}(\mathcal G))\equiv\mathcal B(\mathcal V\equiv(\mathcal X\cup \mathcal Z),\mathcal V,\mathcal E_{\mathcal V,\mathcal V})$, depicted by the collection of red edges.}  
   \label{maxMatching}
\end{figure}
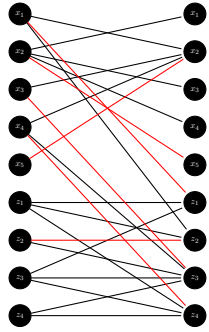

The concept of maximum matching stated above is exemplified in Figure~\ref{maxMatching}, where an instance of $M^{\ast}$ in $\mathcal B(\mathcal V\equiv(\mathcal X\cup \mathcal Z),\mathcal V,\mathcal E_{\mathcal V,\mathcal V})$ is shown, with the set of \mbox{left-unmatched} vertices being $\mathcal U_L(M^{\ast}) = \{z_1,z_3,z_4\}$.

Next, we state some known results regarding structural observability. The following theorem is an extension of the dual of Theorem 5 and Theorem 7 in~\cite{PequitoJournal} regarding the necessary and sufficient conditions for structural observability, applied to the system with the deployed sensors.

\begin{theorem}\label{structobs}
Let $\mathcal{D}(\bar{\mathbf A},\bar{\mathbf C}) = (\mathcal{X}\cup\mathcal{Y},\mathcal{E_{X,X}}\cup\mathcal{E_{X,Y}})$ denote the state-output digraph and $\mathcal{B}(\bar{\mathbf A}, \bar{\mathbf C})\equiv\mathcal{B}(\mathcal{X},\mathcal{X}\cup \mathcal{Y},\mathcal{E_{X,X\cup Y}})$ the state-output bipartite representation. The pair $(\bar{\mathbf A},\bar{\mathbf C})$ is structurally observable if and only if the following two conditions hold:
\begin{enumerate}
	\item[(i)] there is a path from every state vertex to an output vertex in $\mathcal D(\bar{\mathbf A}, \bar{\mathbf C})$; and
	\item[(ii)] there exists a maximum matching $M^{\ast}$ associated to $\mathcal{B}(\bar{\mathbf A}, \bar{\mathbf C})$ such that $\mathcal U_L (M^{\ast}) = \emptyset$. \hfill $\diamond$
\end{enumerate}
\end{theorem}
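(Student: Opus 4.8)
The plan is to reduce the statement to the known structural \emph{controllability} characterization via the duality between observability and controllability. Recall that a numerical pair $(\mathbf A,\mathbf C)$ is observable if and only if $(\mathbf A^\intercal,\mathbf C^\intercal)$ is controllable. Since transposition carries the sparsity pattern of $(\bar{\mathbf A},\bar{\mathbf C})$ bijectively onto that of $(\bar{\mathbf A}^\intercal,\bar{\mathbf C}^\intercal)$, a realization with the structure of $(\bar{\mathbf A},\bar{\mathbf C})$ is observable exactly when its transpose, which has the structure of $(\bar{\mathbf A}^\intercal,\bar{\mathbf C}^\intercal)$, is controllable. Hence $(\bar{\mathbf A},\bar{\mathbf C})$ is structurally observable if and only if $(\bar{\mathbf A}^\intercal,\bar{\mathbf C}^\intercal)$ is structurally controllable, and it suffices to apply the (controllability) Theorems~5 and~7 of~\cite{PequitoJournal} to the transposed pair, treating $\bar{\mathbf C}^\intercal$ as the input matrix, and then translate the two resulting graph conditions back to $\mathcal D(\bar{\mathbf A},\bar{\mathbf C})$ and $\mathcal B(\bar{\mathbf A},\bar{\mathbf C})$.

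First I would establish the graph correspondence under transposition. The state digraph of $\bar{\mathbf A}^\intercal$ is obtained from $\mathcal D(\bar{\mathbf A})$ by reversing every edge, and the output vertices $\mathcal Y$ act as the input vertices of the dual system. The accessibility (input-reachability) condition for controllability of the dual requires every state vertex to be reachable from an input vertex; after reversing edges back, this is exactly the requirement that in $\mathcal D(\bar{\mathbf A},\bar{\mathbf C})$ every state vertex admits a directed path to some output vertex, which is condition~(i).

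Next I would translate the matching condition. The controllability criterion for $(\bar{\mathbf A}^\intercal,\bar{\mathbf C}^\intercal)$ demands a maximum matching that saturates every state vertex on the \emph{right} partition of the associated input-state bipartite graph (no \mbox{right-unmatched} state, i.e.\ no dilation). The key bookkeeping step is to check that, up to interchange of the two partitions induced by transposition, this bipartite graph is precisely $\mathcal B(\bar{\mathbf A},\bar{\mathbf C})\equiv\mathcal B(\mathcal X,\mathcal X\cup\mathcal Y,\mathcal E_{\mathcal X,\mathcal X\cup\mathcal Y})$, so that a \mbox{right-unmatched} state vertex in the dual is identified with a \mbox{left-unmatched} state vertex $\mathcal U_L(M^{\ast})$ here. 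Under this identification the dual no-dilation condition reads exactly as the existence of a maximum matching $M^{\ast}$ of $\mathcal B(\bar{\mathbf A},\bar{\mathbf C})$ with $\mathcal U_L(M^{\ast})=\emptyset$, which is condition~(ii).

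The main obstacle is this transposition bookkeeping: one must verify that the edges $(x_j,y_i)$ arising from $\bar{\mathbf C}$, together with the state edges $\mathcal E_{\mathcal X,\mathcal X}$, assemble into exactly the partition $\mathcal B(\mathcal X,\mathcal X\cup\mathcal Y,\cdot)$ used here, and that a \mbox{left-unmatched} vertex in this partition coincides with the unsaturated state vertex appearing in the dual matching criterion. Once this identification is in place, combining the dualized forms of the two cited theorems yields the equivalence directly; the word \emph{extension} then amounts to the observation that the argument is unaffected by permitting output vertices, rather than only state vertices, to occupy the right side of the bipartite graph.
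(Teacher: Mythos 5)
Your argument is correct and follows essentially the same route as the paper: both reduce the claim to the structural controllability/observability characterization of Theorems~5 and~7 in the cited reference via duality, and then translate the accessibility and no-dilation conditions into conditions on the state-output digraph and the state-output bipartite graph. The paper's justification simply starts from an already-dualized reading of Theorem~7 (left-unmatched vertices of the state bipartite graph measured by distinct sensors, sink SCCs measured) rather than carrying out the transposition explicitly as you do, but the substance of the two arguments is the same.
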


Justification: Theorem 7 in~\cite{PequitoJournal} states that a system is structurally observable if the left-unmatched vertices corresponding to a maximum matching associated to the \mbox{state-output} bipartite representation are measured by distinct sensors, and, at least one state from each sink SCC is measured by a sensor. In Theorem~\ref{structobs}, we define the \mbox{state-output} bipartite graph by adding the edges from the state vertices to the output vertices, which are the sensor nodes, to the state bipartite graph. Thus, we can pair the \mbox{left-unmatched} vertices in the system without sensors with their associated distinct sensors, which gives $\mathcal U_L (M) = \emptyset$. Moreover, since the sink SCCs are connected to a sensor, then every state vertex is the root of a path to an output vertex.

The next result allows us to interpret  a maximum matching of the state-output bipartite graph in terms of a representation in the state-output digraph.

\begin{lemma}[\hspace{-0.01cm}\cite{PequitoJournal}]\label{matMatPathCycle}
    Consider the digraph $\mathcal D(\bar{\mathbf A},\bar{\mathbf C})=(\mathcal X\cup \mathcal Y,\mathcal E_{\mathcal X,\mathcal X\cup\mathcal Y})$ and let $M^{\ast}$ be a maximum matching associated with the state-output bipartite graph $\mathcal B(\bar{\mathbf A},\bar{\mathbf C})\equiv \mathcal B(\mathcal X,\mathcal X\cup\mathcal Y,\mathcal E_{\mathcal X,\mathcal X\cup\mathcal Y})$. Then, the digraph $\mathcal D=(\mathcal X, M^{\ast})$  comprises a disjoint union of cycles and elementary paths, from the right-unmatched vertices to the \mbox{left-unmatched} vertices of $M^{\ast}$, that span $\mathcal D(\bar{\mathbf A},\bar{\mathbf C})$. Moreover, such a decomposition is \emph{minimal}, in the sense that no other spanning subgraph decomposition of $\mathcal D(\bar{\mathbf A},\bar{\mathbf C})$ into elementary paths and cycles contains strictly fewer elementary paths.
    \hfill $\diamond$
\end{lemma}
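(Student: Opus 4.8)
The plan is to reduce everything to the elementary fact that the edge set of a subgraph of $\mathcal{D}(\bar{\mathbf A},\bar{\mathbf C})$ constitutes a matching of $\mathcal{B}(\bar{\mathbf A},\bar{\mathbf C})$ exactly when every vertex of that subgraph has in-degree and out-degree at most one. The path/cycle structure then follows from the standard decomposition of such degree-bounded digraphs, and the minimality of the number of elementary paths becomes a one-line consequence of the maximality of $M^{\ast}$ via a vertex count.

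First I would make the correspondence precise. Each matching edge of $\mathcal{B}(\bar{\mathbf A},\bar{\mathbf C})$ joins a left vertex $x_i\in\mathcal{X}$ to a right vertex $v\in\mathcal{X}\cup\mathcal{Y}$ and is, by construction, the directed edge $x_i\to v$ of the state-output digraph. Because $M^{\ast}$ is a matching, no two of its edges share a left endpoint and none share a right endpoint; hence in the subgraph $(\mathcal{X}\cup\mathcal{Y},M^{\ast})$ every vertex has out-degree at most one (it is the tail of at most one matching edge) and in-degree at most one (the head of at most one matching edge). A digraph with this degree bound is a vertex-disjoint union of elementary paths and elementary cycles, which yields the claimed decomposition. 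Tracing endpoints, each path starts at an in-degree-zero vertex, i.e.\ a right-unmatched vertex, and ends at an out-degree-zero vertex; since output vertices carry no outgoing digraph edge they are always such sinks, while the remaining sinks are exactly the state vertices with no outgoing matching edge, i.e.\ the left-unmatched vertices. The cycles absorb only vertices matched on both sides.

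Next I would address minimality by counting. Any spanning decomposition of $\mathcal{D}(\bar{\mathbf A},\bar{\mathbf C})$ into elementary paths and cycles has in/out-degree at most one at every vertex, so by the correspondence above its edge set is some matching $M$; conversely every matching yields such a spanning decomposition (the untouched vertices becoming trivial paths). In any such decomposition the number of elementary paths equals the number of out-degree-zero vertices, since each path has exactly one head and each cycle none. Writing $n=|\mathcal{X}|$ and $m=|\mathcal{Y}|$, a state vertex has out-degree one precisely when it is matched as a left vertex, so the number of heads is $(n-|M|)+m=n+m-|M|$. This is a strictly decreasing function of $|M|$, whence it is minimized exactly when $|M|$ is maximal. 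Thus the decomposition induced by the maximum matching $M^{\ast}$ realizes the fewest elementary paths among all spanning path-cycle decompositions, establishing minimality.

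The hard part is not the degree argument, which is routine, but the careful bookkeeping of endpoints: one must keep the out-degree-zero state vertices (the genuine left-unmatched vertices) separate from the output vertices, reconcile this with the convention under which $\mathcal{U}_L$ is read off the state bipartite graph, and verify that the head-count is exactly $n+m-|M|$ so that ``maximum matching'' and ``fewest paths'' provably coincide rather than merely correlate. Once the path count is pinned to the matching size, the asserted equivalence is immediate.
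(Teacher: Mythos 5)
Your proposal is correct. Note that the paper itself offers no proof of this lemma --- it is imported verbatim from \cite{PequitoJournal} --- so there is no in-paper argument to compare against; your write-up is a valid self-contained justification of the cited result. The two halves are both sound: the observation that a matching of $\mathcal B(\mathcal X,\mathcal X\cup\mathcal Y,\mathcal E_{\mathcal X,\mathcal X\cup\mathcal Y})$ is precisely an edge subset of the digraph with in- and out-degree at most one at every vertex gives the path/cycle decomposition, and the identity (number of elementary paths) $= n+m-|M|$, obtained by counting out-degree-zero vertices with isolated vertices counted as trivial paths, pins minimality of the path count to maximality of $|M|$ rather than leaving it as a plausibility claim. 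You are also right to flag the endpoint bookkeeping: as stated, the lemma's phrase ``to the left-unmatched vertices'' is slightly loose, since a path may terminate at an output vertex $y\in\mathcal Y$, which by the paper's own definition cannot be left-unmatched (it does not belong to $\mathcal S_1=\mathcal X$); likewise the vertex set of $\mathcal D=(\mathcal X,M^{\ast})$ should be read as $\mathcal X\cup\mathcal Y$ for the edges of $M^{\ast}$ incident to outputs to make sense. These are imprecisions in the statement you inherited, not gaps in your argument.
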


\section{MAIN RESULTS}\label{mainresults}

In this section, we present the main results of the present paper. 
To give a solution to \emph{Problem~1}, we propose to decouple the structure of the problem from its numeric parametrization.  First, we rely on structural system theory to ensure the necessary conditions, and then we show that the necessary conditions on the structure are sufficient to ensure observability through parametrization of the solution. These results are described in Theorem~\ref{decstructobs} and Theorem~\ref{sufficiency}, respectively.

%we proceed by building on the results on structural observability of a system, given in Theorem~\ref{structobs}, by applying it to the augmented system. The necessary and sufficient conditions for the \mbox{distributed-decentralized} structural observability are given in Theorem~\ref{decstructobs}. Theorem~\ref{sufficiency} strengthens the property of generic observability for the augmented system by proving that the observability of the original system along with the conditions attained in Theorem~\ref{decstructobs} guarantee \mbox{distributed-decentralized} observability.

Furthermore, we investigate the cases when the conditions for \emph{Problem~1} do not hold, as stated in \emph{Problem~2} and \emph{Problem~3}. The computational complexity of \emph{Problem~2} is given in Theorem~\ref{NP}. Because this is NP-hard, we provide an approximate polynomial constructive solution to \emph{Problem~2}, by describing an algorithm that decides which edges have to be added to ensure \mbox{distributed-decentralized} structural observability. The proposed procedure is provided in Algorithm~\ref{alg} and its correctness and computational complexity are asserted in Theorem~\ref{thmAlg}. Subsequently, in~\emph{Problem~3}, we address the situation of different costs for the communication links between sensors, which is also NP-hard, and whose approximate solution is similar to that of \emph{Problem~2}.

\subsection{Solution to Problem~1}

First, we ensure that \emph{Problem~1} can be solved when structural observability is sought.

\begin{theorem}\label{decstructobs}
Let $\mathcal D(\tilde{\mathbf A}(\mathcal G))=(\mathcal V\equiv(\mathcal X\cup \mathcal Z),\mathcal E_{\mathcal V,\mathcal V})$ be the state digraph, where $\mathcal X$ corresponds to the labels of the state vertices and $\mathcal Z$ to the labels of the sensors' states. In addition, let $\mathcal N_{i}^{-}=\{v\in\mathcal V: (v,z_i)\in\mathcal E\}$ be the set of in-neighbors of a vertex $z_i$ representing a sensor in $\mathcal D(\tilde{\mathbf A}(\mathcal G))$, $i = 1,\ldots,m$. The following two conditions are necessary and sufficient to ensure that~$(\tilde{\mathbf A}(\mathcal G),\tilde{\mathbf C}_i)$, for $i=1,\ldots,m$, is generically observable: 
\begin{enumerate}
\item[(i)] for every $z\in \mathcal Z$ there must exist a directed path from any $v\in\mathcal V$;
\item[(ii)]  for every $z\in \mathcal Z$ there must exist a set of \mbox{left-unmatched} vertices $\mathcal U_L$, associated with a maximum matching of the bipartite representation of $\mathcal D(\tilde{\mathbf A}(\mathcal G))$, such that  $\mathcal U_L\subset \mathcal N_i^-$ and $\mathcal U_L\cap \mathcal X = \emptyset$. \hfill 
$\diamond$
\end{enumerate}
\end{theorem}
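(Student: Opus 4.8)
The plan is to apply Theorem~\ref{structobs} to each pair $(\tilde{\mathbf A}(\mathcal G),\tilde{\mathbf C}_i)$ and then rewrite its two conditions purely in terms of the state digraph $\mathcal D(\tilde{\mathbf A}(\mathcal G))$ and the vertex $z_i$. Since, for a structured pair, generic observability is equivalent to structural observability, it suffices to show that $(\tilde{\mathbf A}(\mathcal G),\tilde{\mathbf C}_i)$ is structurally observable for every $i$ if and only if (i) and (ii) hold. First I would describe the state-output digraph $\mathcal D(\tilde{\mathbf A}(\mathcal G),\tilde{\mathbf C}_i)$: the output vertices of sensor $i$ consist of a single vertex fed by exactly the states measured by $\mathbf c_i$, together with one vertex per $j\in\mathbb N_i^-$ fed only by $z_j$. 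The key structural observation, used throughout, is that the tails of all these output-incident edges, i.e.\ the $\mathbf c_i$-measured states and the vertices $\{z_j:j\in\mathbb N_i^-\}$, are precisely the in-neighbors of $z_i$ in $\mathcal D(\tilde{\mathbf A}(\mathcal G))$ other than $z_i$ itself, and that $z_i$ carries a self-loop because $\mathbf W(\mathcal G)$ has a nonzero diagonal.

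For condition (i), I would translate ``every vertex reaches an output of sensor $i$'' (Theorem~\ref{structobs}(i)) into ``every vertex reaches $z_i$''. For necessity, a path from $v$ to any output of $\tilde{\mathbf C}_i$ must end at an in-neighbor of $z_i$ (a $\mathbf c_i$-state or some $z_j$), which extends by one edge to $z_i$; ranging over all $i$ yields that every $v\in\mathcal V$ reaches every $z\in\mathcal Z$. For sufficiency, if every $v$ reaches every $z$, then in particular $v$ reaches each $z_j$ with $j\in\mathbb N_i^-$, hence the output measuring $z_j$, so Theorem~\ref{structobs}(i) holds for sensor $i$.

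For condition (ii), I would use the standard correspondence (made precise by Lemma~\ref{matMatPathCycle}) between a maximum matching of $\mathcal B(\tilde{\mathbf A}(\mathcal G),\tilde{\mathbf C}_i)$ with $\mathcal U_L=\emptyset$ and a maximum matching of the state-only bipartite graph $\mathcal B(\tilde{\mathbf A}(\mathcal G))$ whose left-unmatched set injects into the output vertices along measurement edges. Because those outputs are incident only to the $\mathbf c_i$-states (through a single output) and to the $z_j$ (one each), this injectable set consists of at most one $\mathbf c_i$-state together with a subset of $\{z_j:j\in\mathbb N_i^-\}$. It then remains to trade the lone $\mathbf c_i$-state slot for the $z_i$ slot, and conversely. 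Here I would perform an alternating exchange along the edge from the unmatched $\mathbf c_i$-state to $z_i$ and, when needed, the self-loop $z_i\!\to\!z_i$: if $z_i$ is matched through its self-loop or through some $z_j$, the swap directly exposes $z_i$ or that $z_j$; when $z_i$ is instead matched from another $\mathbf c_i$-state, I would continue the alternating path, using that the sensor vertices $\{z_j\}$ feed $z_i$. This confines the single free unmatched vertex to $\mathcal N_i^-\cap\mathcal Z=\{z_i\}\cup\{z_j:j\in\mathbb N_i^-\}$ and guarantees $\mathcal U_L\cap\mathcal X=\emptyset$; the reverse direction re-routes an unmatched $z_i$ back through its self-loop onto a $\mathbf c_i$-state or a $z_j$, which are exactly the vertices matchable to the outputs.

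The main obstacle is precisely this re-routing in condition (ii): one must verify that the alternating exchange can always confine the single free unmatched vertex to $\mathcal N_i^-\cap\mathcal Z$ while keeping every physical state matched and without decreasing the matching size. The delicate case is when $z_i$ is matched from a $\mathbf c_i$-state different from the exposed one; resolving it requires the Dulmage--Mendelsohn characterization of the vertices exposable by some maximum matching, together with the special incidence structure of the augmented system (coincidence of the $\mathcal X$-in-neighborhoods of the single output and of $z_i$, the self-loop at $z_i$, and the edges $z_j\to z_i$). Everything else---the reduction to structural observability and the condition-(i) translation---is routine once this matching surgery is in place.
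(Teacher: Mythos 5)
Your overall route is the same as the paper's: reduce to Theorem~\ref{structobs} for each pair $(\tilde{\mathbf A}(\mathcal G),\tilde{\mathbf C}_i)$ and translate its two conditions into statements about $\mathcal D(\tilde{\mathbf A}(\mathcal G))$ and the in-neighborhood of $z_i$. Your treatment of condition (i) is correct and matches the paper's. For condition (ii) you correctly observe something the paper's proof passes over silently: the output block $\tilde{\mathbf C}_i$ contains, besides the rows of $\mathbb{I}_m^{\mathbb N_i^-}$, the single row $[\,\mathbf c_i\ \ \mathbf 0\,]$, so a left-saturating maximum matching of $\mathcal B(\tilde{\mathbf A},\tilde{\mathbf C}_i)$ may restrict to a maximum matching of $\mathcal B(\tilde{\mathbf A})$ whose left-unmatched set contains one $\mathbf c_i$-measured \emph{physical} state. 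The paper's necessity argument simply asserts that ``the only possible left-unmatched vertices in the bipartite state graph are $z\in\mathcal Z$,'' which does not follow without the exchange you describe; you are more careful here than the source.

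However, that exchange is exactly where your proposal has a genuine gap, and you flag it yourself without closing it. The one-step swap along $x_j\to z_i$ resolves the issue only when the right-vertex $z_i$ is matched from a vertex of $\mathcal Z$; when it is matched from another $\mathbf c_i$-state you propose to ``continue the alternating path'' and appeal to the Dulmage--Mendelsohn characterization, but you never show that such a path terminates at a vertex of $\mathcal N_i^-\cap\mathcal Z$. In general an alternating path from an exposed $\mathbf c_i$-state does reach some exposable vertex of $\mathcal Z$ (e.g.\ by differencing with the matching guaranteed by observability of $(\mathbf A,\mathbf C)$, which saturates $\mathcal X$), but that vertex need not be an in-neighbor of $z_i$, so the conclusion $\mathcal U_L\subset\mathcal N_i^-$ with $\mathcal U_L\cap\mathcal X=\emptyset$ is not yet established. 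This is the precise step on which the necessity of condition (ii) rests, and it requires an argument exploiting the self-loops on every $z_l$ and the strong connectivity of $\mathcal G$ forced by condition (i) (which is what makes the $\mathcal Z$-block flexible enough to absorb the exchange); neither your sketch nor, for that matter, the paper's own proof supplies it. The sufficiency direction of your sketch and the rest of the reduction are fine.
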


\begin{proof}

We will prove that the above conditions are both necessary and sufficient for any subset of sensors deployed to the system, and, in particular, for every sensor $i$, with $i = 1,\ldots,m$. The proof will alternate between the state digraph $D(\tilde{\mathbf A}(\mathcal G))$ introduced above, and the state-output digraph $D(\tilde{\mathbf A}(\mathcal G),\tilde{\mathbf C}_i)$, introduced in Theorem~\ref{structobs}. To avoid burdensome notation, we will use $\tilde{\mathbf A}$ instead of $\tilde{\mathbf A}(\mathcal G)$ in the following.

\emph{Necessity}: We will start by assuming that $(\tilde{\mathbf A},\tilde{\mathbf C}_i)$, for $i=1,\ldots,m$, is structurally observable for any subset of sensors in $\mathcal Z$ and prove that conditions~(i) and~(ii) hold. To this end, we have leveraged Theorem~\ref{structobs} for a convenient application. Let $\mathbb N_i^-=\{j\in\{1,\ldots,m\}: (i,j)\in \mathcal E\}$ be the set of indices of the in-neighbors of sensor $i$ given by the communication graph $\mathcal G$ (notice the difference between $\mathbb N_i^-$ and $\mathcal N_i^-$). Let us define $\mathcal Y=\{y_1,\ldots y_m\}$ as the set of outputs of the augmented system. These are virtual outputs that are attached to the sensors' states and are defined by $\mathbb{I}_m^{\mathbb N_i^-}$, for each $i=1,\ldots,m$. Let $\mathcal{D}(\tilde{\mathbf A}, \tilde{\mathbf C}) = (\mathcal V\cup\mathcal Y,\mathcal{E_{V,V\cup Y}})$ be the state-output digraph of the augmented system. Condition~(i) of Theorem~\ref{structobs} applied to $\mathcal D(\tilde{\mathbf A}, \tilde{\mathbf C})$ gives that every output of the system is reachable from any vertex in $\mathcal V$ and condition~(i) of Theorem~\ref{decstructobs} ensues. 

Let $\mathcal{B}(\tilde{\mathbf A}, \tilde{\mathbf C}_i)\equiv\mathcal{B}(\mathcal V,\mathcal V\cup\mathcal Y,\mathcal{E_{V,V\cup Y}})$ be the augmented system's digraph. Condition~(ii) of Theorem~\ref{structobs} for $\mathcal D(\tilde{\mathbf A},\tilde{\mathbf C}_i)$ gives that there is no \mbox{left-unmatched} vertex in $\mathcal V$ with respect to a maximum matching in $\mathcal{B}(\tilde{\mathbf A}, \tilde{\mathbf C}_i)$. Therefore, the only possible \mbox{left-unmatched} vertices in the bipartite state graph $\mathcal{B}(\tilde{\mathbf A})\equiv\mathcal{B}(\mathcal V,\mathcal V,\mathcal{E_{V,V}})$ are $z\in \mathcal Z$. This gives $\mathcal U_L\cap \mathcal X = \emptyset$. Subsequently, the remaining \mbox{left-unmatched} vertices $z\in\mathcal Z$ are measured by the outputs $y\in\mathcal Y$ given by $\mathbb{I}_n^{\mathbb N_i^-}$. Hence, the \mbox{left-unmatched} vertices $\mathcal U_L$ have to be in $\mathcal N_i^-$ by the construction of $\tilde{\mathbf C}_i$, and condition~(ii) also holds.

%The associated virtual output to the sensor's state $z_i$ measures the states identified by $\tilde{\mathbf C}_i$ in~\eqref{outputAugmented}. Theorem~\ref{structobs} applied to the augmented system with respect to any subset of sensors asserts that every virtual output $s_i$ is reachable from all vertices in $\mathcal D(\tilde{\mathbf A}(\mathcal G))$. Because $s_i$ practically represents sensor $z_i$, we have that condition~(i) holds.

%The original system \eqref{dynamics}-\eqref{output} is also structurally observable, and from Theorem~\ref{structobs}, we know there exists a maximum matching $M$ associated to $\mathcal B(\bar{\mathbf A})$ such that for every \mbox{left-unmatched} vertex in $x\in\mathcal U_L^0$ there is a \emph{distinct} sensor associated to it. Expanding the maximum matching $M$ to the augmented system's bipartite state graph $\mathcal B(\tilde{\mathbf A})$, we can match all the vertices $x\in \mathcal U_L^0$ with their associated sensors. Every sensor has a self-loop as a result of the construction of the communication graph $\mathbf W(\mathcal G)$, therefore, if $z_i$ is not right-matched by a path from a $x\in \mathcal U_L^0$, then the maximum matching can always contain a cycle in $z_i$. Therefore, the set $\mathcal U_L$ of \mbox{left-unmatched} vertices associated to $\mathcal B(\tilde{\mathbf A})$ will only be composed of $z\in\mathcal Z$ and $\mathcal U_L\cap \mathcal X = \emptyset$.

\emph{Sufficiency}: To prove that the conditions in Theorem~\ref{decstructobs} are sufficient, we consider~(i) and~(ii) satisfied, and show that the augmented system is structurally observable.

We consider the digraph $\mathcal D(\tilde{\mathbf A}, \tilde{\mathbf C}_i)$ and the associated state-output bipartite graph $\mathcal B(\tilde{\mathbf A},\tilde{\mathbf C}_i)$. As the name ``sink" suggests, we can regard the augmented system as a layered system, with the system's states in the top layer, followed by the layer of sensors' states, and the system outputs layer as the bottom layer. The structural observability of~\eqref{dynamics}-\eqref{output} implies that the sink SCCs in the augmented system are in the layer of sensors. Hence, there is a path from every vertex in $\mathcal V$ to an output vertex in $\mathcal D(\tilde{\mathbf A},\tilde{\mathbf C}_i)$ and condition~(i) of Theorem~\ref{structobs} is granted.

Condition~(ii) in Theorem~\ref{decstructobs} states that there is a maximum matching in the state bipartite graph $\mathcal B(\tilde{\mathbf A})$ such that no state vertex is a \mbox{left-unmatched} vertex and all \mbox{left-unmatched} vertices are in-neighbors of the sensor $i$. Since there are edges from the sensors to the output vertices in the augmented system represented by  $\mathcal D(\tilde{\mathbf A}, \tilde{\mathbf C}_i)$, there is a maximum matching $M^{\ast}$ in $\mathcal B(\tilde{\mathbf A},\tilde{\mathbf C}_i)$ such that no system state and no sensor state are \mbox{left-unmatched}. In other words, $\mathcal U_L (M^{\ast}) = \emptyset$, so condition~(ii) of Theorem~\ref{structobs} is fulfilled. This completes the proof of the structural observability of $(\tilde{\mathbf A},\tilde{\mathbf C}_i)$ for $i=1,\ldots,m$.
\end{proof}

%Each sensor's state $z_i$ has a virtual output associated that measures itself and other nodes in the sensors layer, given by the rows of $\mathcal N_i^-$. Condition~(ii) ensures that only $z\in\mathcal Z$ are in the set of \mbox{left-unmatched} vertices $\mathcal U_L$ and that all the unmatched vertices are in $\mathcal N_i^-$. Hence, a distinct virtual output associated to $z_i$ through $\mathcal N_i^-$ measures each $z\in\mathcal U_L$ and proves condition~(i) of Theorem~\ref{structobs}. Moreover, condition~(i) grants that the communication graph $\mathcal G$ is strongly connected, so there is a path from each sensor $z_i$ to all the sensors $z_j$. As a result, at least one node from each sink SCC is measured by the virtual output associated to $z_i$ and condition~(ii) of Theorem~\ref{structobs} is fulfilled. This completes the proof of the structural observability of \eqref{dynAugmented}-\eqref{outputAugmented}.

Structural observability holds for almost all numerical realizations of both $\tilde{\mathbf A}(\mathcal G)$ and of $\tilde{\mathbf C}_i$. Nonetheless, we notice that, unlike~\cite{dionSurvey}, we want the parametrization of $(\mathbf A,\mathbf C)$ to be fixed. Therefore, we show that with the observability of $(\mathbf A,\mathbf C)$ and with a parametrization of $\mathbf W(\mathcal G)$ alone, it is possible to attain the observability of $(\tilde{\mathbf A}(\mathcal G), \tilde{\mathbf C}_i)$.

\begin{theorem}\label{almostAll}
If $(\mathbf A,\mathbf C)$ is observable and $(\tilde{\mathbf A}(\mathcal G),\tilde{\mathbf C}_i)$, for $i=1,\ldots,m$, is structurally observable, then almost all realizations of $\mathbf W(\mathcal G)$ ensure that $(\tilde{\mathbf A}(\mathcal G),\tilde{\mathbf C}_i)$ is observable for $i=1,\ldots,m$.
\hfill $\diamond$
\label{sufficiency}
\end{theorem}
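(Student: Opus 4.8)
The plan is to reduce the claim to a genericity statement about a single polynomial and then to produce, for each sensor $i$, one admissible realization of $\mathbf W(\mathcal G)$ for which $(\tilde{\mathbf A}(\mathcal G),\tilde{\mathbf C}_i)$ is observable. Fix $\mathbf A$ and $\mathbf C$ at their given (observable) values and regard the free entries of $\mathbf W(\mathcal G)$ --- those allowed to be non-zero by $\mathcal G$, including the non-zero diagonal --- as a parameter vector $w\in\mathbb R^{d}$. Observability of $(\tilde{\mathbf A}(\mathcal G),\tilde{\mathbf C}_i)$ is equivalent to the observability matrix $\mathcal O_i(w)=[\tilde{\mathbf C}_i^\intercal\ (\tilde{\mathbf C}_i\tilde{\mathbf A})^\intercal\ \cdots\ (\tilde{\mathbf C}_i\tilde{\mathbf A}^{n+m-1})^\intercal]^\intercal$ having full column rank $n+m$, and each of its $(n+m)\times(n+m)$ minors is a polynomial in $w$. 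Hence the set of $w$ for which sensor $i$ fails to be observable is the common zero locus of these minors, an algebraic variety $V_i$. If each $V_i$ is a \emph{proper} subvariety, then each has Lebesgue measure zero, and so does $\bigcup_{i=1}^{m}V_i$; its complement is therefore a set of full measure on which all $m$ pairs are simultaneously observable. Since a minor is a non-zero polynomial exactly when it is non-zero for at least one $w$, the theorem reduces to exhibiting, for each $i$ separately, one admissible $\mathbf W(\mathcal G)$ making $(\tilde{\mathbf A}(\mathcal G),\tilde{\mathbf C}_i)$ observable.

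To construct such a realization I would use the Popov--Belevitch--Hautus (PBH) test together with the block-triangular form of $\tilde{\mathbf A}(\mathcal G)$, which gives $\mathrm{spec}(\tilde{\mathbf A}(\mathcal G))=\mathrm{spec}(\mathbf A)\cup\mathrm{spec}(\mathbf W)$. Because the diagonal of $\mathbf W(\mathcal G)$ is free, I may first pick $\mathbf W$ so that its eigenvalues are distinct and disjoint from $\mathrm{spec}(\mathbf A)$. It then suffices to verify, for every $\lambda$, that the pencil $[\,(\tilde{\mathbf A}(\mathcal G)-\lambda I)^\intercal\ \tilde{\mathbf C}_i^\intercal\,]^\intercal$ has full column rank. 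Writing a hypothetical kernel vector as $[\xi^\intercal\ \zeta^\intercal]^\intercal$ with $\xi\in\mathbb C^{n}$ and $\zeta\in\mathbb C^{m}$, the PBH equations read $(\mathbf A-\lambda I)\xi=0$, $\mathbf c_i\xi=0$, $\mathbf C\xi+(\mathbf W-\lambda I)\zeta=0$, and $\zeta_j=0$ for all $j\in\mathbb N_i^-$.

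The two cases are then handled by the two hypotheses. If $\lambda\notin\mathrm{spec}(\mathbf A)$ then $\xi=0$, and the remaining equations $(\mathbf W-\lambda I)\zeta=0$ and $\zeta_j=0$ for $j\in\mathbb N_i^-$ state precisely that $\zeta$ is an unobservable mode of the sensor subnetwork $(\mathbf W,\mathbb I_m^{\mathbb N_i^-})$; conditions (i)--(ii) of Theorem~\ref{decstructobs} guarantee that this pair is structurally observable, so for generic $\mathbf W$ it is observable and $\zeta=0$. If instead $\lambda\in\mathrm{spec}(\mathbf A)$, then $\xi$ may be a non-zero eigenvector of $\mathbf A$ with $\mathbf c_i\xi=0$; here I invoke observability of $(\mathbf A,\mathbf C)$, which by PBH forces $\mathbf C\xi\neq 0$. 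Since $\lambda\notin\mathrm{spec}(\mathbf W)$ by the choice above, $\zeta=-(\mathbf W-\lambda I)^{-1}\mathbf C\xi$ is uniquely determined and non-zero, and observability amounts to showing that this $\zeta$ must have a non-zero entry indexed by $\mathbb N_i^-$, contradicting $\zeta_j=0$ for $j\in\mathbb N_i^-$.

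This last point is the crux and the main obstacle: I must show that the structural hypothesis prevents $(\mathbf W-\lambda I)^{-1}\mathbf C\xi$ from vanishing on all coordinates in $\mathbb N_i^-$ for generic $\mathbf W$. The reachability part of condition (i) of Theorem~\ref{decstructobs} --- every sensor state is reachable from every vertex --- precludes a combinatorial disconnection between the sensors excited by $\mathbf C\xi$ and the coordinates indexed by $\mathbb N_i^-$, so that the vanishing of $(\mathbf W-\lambda I)^{-1}\mathbf C\xi$ on $\mathbb N_i^-$ is a non-trivial polynomial condition in $w$ and fails on a dense set. Once this is established for both cases, the PBH pencil has full column rank at every $\lambda$ for at least one admissible $\mathbf W$, so each observability minor is a non-zero polynomial in $w$, and the measure-zero argument of the first paragraph completes the proof. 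The delicate step is disentangling the contribution of the fixed, merely-observable pair $(\mathbf A,\mathbf C)$ --- a single point, not a generic point, of its own parameter space --- from the generic behaviour of $\mathbf W$, so that fixing $(\mathbf A,\mathbf C)$ does not accidentally force the observability minor to vanish identically.
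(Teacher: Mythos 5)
Your strategy coincides with the paper's: the PBH test on the block-triangular pencil, a choice of $\mathbf W(\mathcal G)$ with simple eigenvalues disjoint from $\text{spec}(\mathbf A)$, the case $\lambda\notin\text{spec}(\mathbf A)$ handled by observability of the sensor subnetwork $(\mathbf W(\mathcal G),\mathbb{I}_m^{\mathbb N_i^-})$, and the case $\lambda\in\text{spec}(\mathbf A)$ reduced to the behaviour of $\mathbf H_i=\mathbb{I}_m^{\mathbb N_i^-}(\mathbf W(\mathcal G)-\lambda\mathbb{I}_m)^{-1}\mathbf C$ on $K_\lambda=\ker(\mathbf A-\lambda\mathbb{I}_n)\cap\ker(\mathbf c_i)$. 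Your opening paragraph (the bad parameters form an algebraic variety, a proper variety is Lebesgue-null, and a finite union of null sets is null) is a correct and welcome formalization of ``almost all'' that the paper leaves implicit. But the step you yourself single out as the crux is exactly where your argument stops, and the ingredient you reach for to close it is the wrong one.

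The difficulty is not whether $\mathbf H_i\xi$ can vanish for one fixed $\xi$: for a single nonzero $\mathbf C\xi$, strong connectivity of $\mathcal G$ (condition (i) of Theorem~\ref{decstructobs}) does make some coordinate of $(\mathbf W(\mathcal G)-\lambda\mathbb{I}_m)^{-1}\mathbf C\xi$ indexed by $\mathbb N_i^-$ a not-identically-zero rational function of $w$. The difficulty is that $K_\lambda$ may have dimension $g\ge 2$ when $\lambda$ has geometric multiplicity larger than one, and you must show that $\mathbf H_i$ is injective on all of $K_\lambda$ for a \emph{common} generic $w$ --- a rank-$g$ condition; you cannot get this by taking the union over $\xi\in K_\lambda$ of the per-vector exceptional sets, since that union is uncountable. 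The generic rank of the structured transfer matrix $\mathbb{I}_m^{\mathbb N_i^-}(\mathbf W(\mathcal G)-\lambda\mathbb{I}_m)^{-1}\mathbf C$ equals the maximal number of vertex-disjoint paths in the communication digraph from the sensor vertices excited by $\mathbf C$ to the coordinates in $\mathbb N_i^-$ (the result of \cite{van1988graph} that the paper invokes); strong connectivity guarantees only one such path, hence generic rank one, which is insufficient whenever $g\ge 2$. What supplies the $g$ disjoint paths is condition (ii) of Theorem~\ref{decstructobs} --- every left-unmatched vertex of a suitable maximum matching lies in $\mathcal N_i^-$ --- and it is this matching condition, not reachability, that your proof must invoke at the crux. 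Until the rank of $\mathbf H_i$ restricted to $K_\lambda$ (equivalently, of $\mathbf M_A=[(\mathbf A-\lambda\mathbb{I}_n)^\intercal\ \mathbf H_i^\intercal]^\intercal$) is pinned down this way, you have not exhibited even one admissible $\mathbf W(\mathcal G)$ making $(\tilde{\mathbf A}(\mathcal G),\tilde{\mathbf C}_i)$ observable, and the measure-zero argument of your first paragraph has nothing to stand on.
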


\begin{proof}

We make the same notational assumptions as in the proof of Theorem~\ref{decstructobs}.
We resort to the well known Popov-Belevitch-Hautus observability criterion \cite{Hespanha09}. The pair $(\tilde{\mathbf A},\tilde{\mathbf C}_i)$ is observable if and only if 
\[
\text{rank}\underbrace{\left [ \begin{array}{c}
\tilde{\mathbf A}-\lambda \mathbb{I}_{n+m}\\
\tilde{\mathbf C}_i
\end{array}\right ]}_{M}=n+m, \ \lambda \in\mathbb{C}, \ i=1,\ldots,m.
\]
Using \eqref{dynAugmented} and \eqref{outputAugmented}, $M$ can be further written as follows:
\[\begin{array}{ll}
M&=\left[
\begin{matrix}
\mathbf A-\lambda \mathbb{I}_{n} &\mathbf{0}_{n\times m}\\
\mathbf C& \mathbf W(\mathcal G)-\lambda \mathbb{I}_{m}\\ 
-\  \mathbf c_i\  - & \mathbf{0}_{1\times m}\\ 
\mathbf{0}_{|\mathbb N_i^-|\times n} &\mathbb{I}_{m}^{\mathbb N_i^-}\end{matrix}
\right].
\end{array}
\]

Now, notice that as a consequence of condition (i) in Theorem~\ref{decstructobs}, $\mathcal G$ is a strongly connected graph. Furthermore, a sensor is assumed to be able to communicate with itself, therefore, $\mathbf W(\mathcal G)$ is irreducible and its diagonals can be arbitrarily chosen. In particular, for almost all possible parameterizations of $\mathbf W(\mathcal G)$, we can choose the weights such that $\mathbf W(\mathcal G)$ has only simple eigenvalues that are distinct from those of $\mathbf A$, and $\text{rank}(\mathbf W(\mathcal G))=n$. Thus, $\lambda$ can be in the spectrum of $\mathbf A$ or in the spectrum of $\mathbf W(\mathcal G)$. We will treat both these cases in the following.

(i) $\lambda\notin \text{spec}(\mathbf A)$. Then,
\[
\text{rank}(\mathbf M)=\underbrace{\text{rank} (\mathbf A-\lambda \mathbb{I}_n)}_{=n} + \underbrace{\text{rank} \left[ \begin{array}{c}
 \mathbf W(\mathcal G)-\lambda \mathbb{I}_m\\
 \mathbb{I}_m^{\mathbb N_i^-}
\end{array}\right]}_{=m}.
\]
The first term equals to $n$ because $\lambda\notin \text{spec}(\mathbf A)$ does not produce a collapse of rank in $\mathbf A-\lambda \mathbb{I}_n$. $(\mathbf W(\mathcal G),\mathbb{I}_m^{\mathbb N_i^-})$ is structurally observable, since $\mathbf W(\mathcal G)$ is irreducible and has \mbox{zero-free} diagonal.
For almost all parameterizations of $\mathbf W(\mathcal G)$, $(\mathbf W(\mathcal G),\mathbb{I}_m^{\mathbb N_i^-})$ is observable. Therefore, by choosing such a parametrization and applying Popov's criterion, we obtain that the second term equals~$m$.

(ii) $\lambda \in \text{spec}(\mathbf A)$. Thus, $\lambda \notin \text{spec}(\mathbf W(\mathcal G))$ for almost all chosen parameterizations. Using Schur's complement\footnote{
For the matrices $\mathbf A\in\mathbb R^{n\times m}, \mathbf B\in\mathbb R^{n\times n},\mathbf D\in\mathbb R^{p\times n}$, the following is true:
\[ \left[ \begin{matrix}\mathbf A & \mathbf B\\ \mathbf 0_{p\times m} &\mathbf D \end{matrix}\right] = 
\left[ \begin{matrix} \mathbb I_n & \mathbf 0_{n\times p}\\\mathbf D\mathbf B^{-1} & \mathbb I_{p} \end{matrix}\right] 
\left[ \begin{matrix} \mathbf 0_{n\times m} & \mathbf B\\ -\mathbf D\mathbf B^{-1}\mathbf A& \mathbf 0_{p\times  m}\end{matrix}\right] 
\left[ \begin{matrix} \mathbb I_{m} & \mathbf 0_{m\times n} \\\mathbf B^{-1}\mathbf A & \mathbb I_{n} \end{matrix}\right].
\]
Since the transformations are full-rank, then $ \text{rank}\left[ \begin{matrix}\mathbf A & \mathbf B\\ \mathbf 0_{p\times m} &\mathbf D \end{matrix}\right] = \text{rank}
\left[ \begin{matrix} \mathbf 0_{n\times m} & \mathbf B\\ -\mathbf D\mathbf B^{-1}\mathbf A& \mathbf 0_{p\times  m}\end{matrix}\right].$
},
 the expression for the rank of $M$ can be \mbox{re-written} as follows:
\[
\text{rank}(\mathbf M)=\text{rank}\left[
\begin{matrix}
\mathbf A-\lambda \mathbb{I}_{n} &\mathbf{0}_{n\times m}\\
-\  \mathbf c_i\  - & \mathbf{0}_{1\times m}\\ 
\mathbf{0}_{m\times n}& \mathbf W(\mathcal G)-\lambda \mathbb{I}_{m}\\
\mathbb{I}_{m}^{\mathbb N_i^-}(\mathbf W(\mathcal G)-\lambda \mathbb{I}_{m})^{-1}C&\mathbf{0}_{|\mathbb N_i^-| \times m}
\end{matrix}
\right].
\]
Let $\mathbf H_i(\mathcal G) = \mathbb{I}_{m}^{\mathbb N_i^-}(\mathbf W(\mathcal G)-\lambda \mathbb{I}_{m})^{-1}\mathbf C$. $\mathbf H_i(\mathcal G)$ corresponds to the transfer function from the sensors identified by $\mathbf C$ to the measurements identified by $\mathbb{I}_m^{\mathbb N_i^-}$. Thus, 
\[
\text{rank}(M)= \underbrace{\text{rank}(\mathbf W(\mathcal G)-\lambda \mathbb{I}_m)}_{=m}+ \text{rank} \underbrace{\left[ \begin{array}{c}
\mathbf A-\lambda \mathbb{I}_n\\
 \mathbf H_i(\mathcal G)\end{array}\right]}_{\mathbf M_A}.
\]

The weights in $\mathbf W(\mathcal G)$ can be designed such that the diagonal entries are distinct, and $\lambda \notin \text{spec}(\mathbf W(\mathcal G))$ gives full rank to the first term. From \cite{van1988graph}, $\mathbf H_i(\mathcal G)$ generates  almost all subspaces of dimension $d$, where $d$ corresponds to the number of vertex disjoint paths from the sensors identified by $\mathbf C$ to the measurements identified by $\mathbb{I}_m^{\mathbb N_i^-}$. Next, notice that the number of vertex disjoint paths $d$ is at least equal to the number of \mbox{left-unmatched} vertices of the original system, since these have to be in a neighborhood of each sensor of the augmented system, as prescribed 
in condition~(ii) in Theorem~\ref{decstructobs}. Therefore, we have that $\text{rank}(\mathbf M_A)=n$ for almost all realizations of $\mathbf W(\mathcal G)$, and the result follows.
\end{proof}

\subsection{Solution to Problem~2}\label{SubsP2}

Next, we leverage some of the conditions prescribed to have solutions to \emph{Problem 1}~to obtain a solution to \emph{Problem 2}. First, notice that determining a solution to \emph{Problem 2}~can be challenging since it requires to ensure two disjoint conditions, see Theorem~\ref{decstructobs}. In addition, it requires those conditions to hold across different sensors.

Therefore,  we first provide solutions to \emph{Problem 2}~when one of the conditions of Theorem~\ref{decstructobs} holds, hence, only the remaining condition needs to be ensured. More specifically, we have two scenarios:  (i) there are \mbox{left-unmatched} vertices in $\mathcal N_i^-$ for every $i$ such that $\mathcal U_L\subset \mathcal N_i^-$, and (ii)~$\mathcal G$~is strongly connected. The second scenario is equivalent to condition (i) in Theorem~\ref{decstructobs}, since we already know the sensors are reachable from every state $x\in\mathcal X$ due to the observability of~\eqref{dynamics}-\eqref{output}. Thus, we only need to ensure that each subset of sensors is reachable from the other subsets of sensors.

In the first scenario, we want to prove that $\mathcal G$ is strongly connected. Towards this goal we need to introduce the following definition~\cite{sofIP}:

\begin{definition}[Sequential-pairing]\label{seqPairing}
    Consider  two sets of indices $\mathcal I=\{i_1,\ldots,i_n\}$ and $\mathcal J=\{j_1,\ldots,j_n\}$, and  a maximum matching $M$ of the bipartite graph $\mathcal B(\mathcal J,\mathcal I,\mathcal E_{\mathcal J,\mathcal I})$, where  $\mathcal E_{\mathcal J,\mathcal I} \subseteq \mathcal J\times\mathcal I$. We denote by $|\mathcal I,\mathcal J\rangle_M$ a sequential-pairing induced by $M$, defined as follows:
    $$
        |\mathcal I,\mathcal J\rangle_M=\Bigg(\bigcup_{l=2,\ldots,k}\{(i_l,j_{l-1})\}\Bigg) \cup \big\{(i_1,j_k)\big\},
    $$
    where  $(i_l,j_l)\in M$, for $l=1,\ldots,k$ and $k = min\{|\mathcal I|,|\mathcal J|\}$. \hfill $\diamond$
\end{definition}

\begin{remark}
The sequential-pairing consists of the collection of edges such that $M\cup |\mathcal I,\mathcal J\rangle_M$ forms  a cycle. A graph spanned by a cycle formed by $M\cup |\mathcal I,\mathcal J\rangle_M$ is a strongly connected graph.~\hfill $\diamond$
\end{remark}

The communication graph $\mathcal G$ can be decomposed into strongly connected components, with each sink strongly connected component being reachable from at least one source strongly connected component. Subsequently, we have the following result:

%\begin{lemma}\label{maxEdge}
%Let the DAG decomposition of $\mathcal G$ be given by a collection of SCCs denoted by $\{\Sigma_i\}_{i=1,\ldots, N}$, where without loss of generality, $\Sigma^{so}\equiv \{\Sigma_j\}_{j=1,\ldots, \alpha}$ are source SCCs and $\Sigma^{si}\equiv\{\Sigma_k\}_{k=1,\ldots,\beta}$ are sink SCCs, where $\color{red}{\alpha+\beta=|\mathcal G|}$. In addition, consider $\mathcal B\equiv \mathcal B(\Sigma^{so},\Sigma^{si},\mathcal E_{\Sigma^{so},\Sigma^{si}})$, where $\mathcal E_{\Sigma^{so},\Sigma^{si}}=\{(\Sigma_j,\Sigma_k): \text{ if } \Sigma_k\text{ is reachable from }\Sigma_j \}$. Given a maximum matching $M^{\ast}$ of $\mathcal B$, the minimum number of edges to be added to ensure $\mathcal G$ to be strongly connected is $\max\{\alpha,\beta\}$. \hfill $\diamond$
%\end{lemma}

\begin{lemma}\label{maxEdge}
Let the DAG decomposition of $\mathcal G$ be given by a collection of SCCs denoted by $\{\Sigma_i\}_{i=1,\ldots, N}$, where $\Sigma^{so}\equiv \{\Sigma_j\}_{j=1,\ldots, \alpha}$ are source SCCs and $\Sigma^{si}\equiv\{\Sigma_k\}_{k=1,\ldots,\beta}$ are sink SCCs, where $\alpha+\beta<=N$. In addition, consider $\mathcal B\equiv \mathcal B(\Sigma^{so},\Sigma^{si},\mathcal E_{\Sigma^{so},\Sigma^{si}})$, where $\mathcal E_{\Sigma^{so},\Sigma^{si}}=\{(\Sigma_j,\Sigma_k): \text{ if } \Sigma_k\text{ is reachable from }\Sigma_j \}$. Given a maximum matching $M^{\ast}$ of $\mathcal B$ and the sequential pairing induced by it, the minimum number of edges to be added to ensure $\mathcal G$ to be strongly connected is $\max\{\alpha,\beta\}$. \hfill $\diamond$
\end{lemma}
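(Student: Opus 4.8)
The plan is to prove the equality by combining a matching lower bound with a constructive upper bound, both compared against $\max\{\alpha,\beta\}$. I will work entirely in the condensation DAG, where each source SCC has no incoming inter-component edge and each sink SCC has no outgoing one, and I will use the hypothesis $\alpha+\beta\le N$ to assume the source and sink families are disjoint (no component is simultaneously a source and a sink), which is the regime in which the formula is clean.

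First I would establish the lower bound. In any augmentation rendering $\mathcal G$ strongly connected, every SCC, viewed as a set, must admit at least one edge entering it from outside and at least one leaving it. Since a source SCC receives no inter-component edge in the original DAG, each of the $\alpha$ source SCCs must be the head-endpoint of at least one newly added edge; as each added edge has a single head lying in a single SCC, it can serve at most one source SCC, so at least $\alpha$ edges are needed. The dual argument on the $\beta$ sink SCCs, each requiring a newly added outgoing edge and each added edge having a single tail, forces at least $\beta$ edges. Hence at least $\max\{\alpha,\beta\}$ edges are required.

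For the upper bound I would exhibit an augmentation of size exactly $\max\{\alpha,\beta\}$ built from $M^{\ast}$ and its sequential pairing. Write $k=|M^{\ast}|$ and assume without loss of generality $\alpha\ge\beta$ (otherwise reverse all edge orientations, which swaps sources with sinks and preserves both the minimum and strong connectivity). Adding the $k$ edges of $|\Sigma^{si},\Sigma^{so}\rangle_{M^{\ast}}$ closes $M^{\ast}$ into a single cycle through all matched source–sink pairs, so by the Remark the matched sources, the matched sinks, and every SCC lying on the corresponding reachability paths collapse into one strongly connected core $C$. The crux is disposing of the $\alpha-k$ unmatched sources and $\beta-k$ unmatched sinks economically, and here the maximality of $M^{\ast}$ is decisive: if an unmatched source reached an unmatched sink, that reachability edge could be adjoined to $M^{\ast}$, contradicting maximality. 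Consequently every unmatched source reaches a \emph{matched} sink (hence reaches $C$) and every unmatched sink is reached from a \emph{matched} source (hence is reachable from $C$). I would then pair each of the $\beta-k$ unmatched sinks with a distinct unmatched source and add the single edge from that sink to that source; combined with the existing path from $C$ to the sink and from the source to $C$, this absorbs \emph{both} endpoints into $C$ at a cost of one edge per pair. The remaining $\alpha-\beta$ unmatched sources, each already reaching $C$, are absorbed by one edge from $C$ to each. The total is $k+(\beta-k)+(\alpha-\beta)=\alpha=\max\{\alpha,\beta\}$.

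Finally I would verify that the augmented graph is genuinely strongly connected, not merely that all sources and sinks lie in $C$: any remaining SCC is reachable from some source and reaches some sink in the original DAG, and since all sources and sinks now lie in $C$, such an SCC both reaches $C$ and is reachable from $C$ along surviving DAG edges, so it too lies in $C$. Matching the lower and upper bounds then gives $\max\{\alpha,\beta\}$. I expect the main obstacle to be precisely the economical handling of the unmatched components: converting the maximality of $M^{\ast}$ into the two reachability statements, and arguing that each sink-to-source edge merges two components into $C$, since the naive construction would spend one edge per unmatched source \emph{and} one per unmatched sink and thereby overshoot the bound.
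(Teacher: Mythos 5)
Your construction for the upper bound is essentially the paper's: close a maximum matching of the source--sink reachability bipartite graph into a cycle via the sequential pairing, then absorb the leftover source/sink SCCs. The differences are worth noting, and they work in your favour. First, you supply a lower bound (every source SCC needs a newly added edge-head, every sink SCC a newly added edge-tail, and a single added edge can serve at most one of each), which the paper omits entirely even though the lemma claims a \emph{minimum}; without it the paper only shows that $\max\{\alpha,\beta\}$ edges suffice. Second, your treatment of the case $|M^{\ast}|=k<\min\{\alpha,\beta\}$ is cleaner and more convincing than the paper's case~(i): you derive from maximality of $M^{\ast}$ that no unmatched source reaches an unmatched sink, hence every unmatched source reaches the core $C$ and every unmatched sink is reachable from $C$, so one sink-to-source edge per unmatched pair absorbs two components at once; the paper instead argues by ``collapsing'' components and ``completing the matching,'' with a count that is harder to verify. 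Your final step (intermediate SCCs lie between a source and a sink, both now in $C$) is also made explicit where the paper leaves it implicit. The one caveat is your standing assumption that no SCC is simultaneously a source and a sink: $\alpha+\beta\le N$ does not by itself rule this out, and the paper's Remark~2 explicitly wants the lemma to cover isolated SCCs, for which ``every source reaches some sink'' fails; your lower bound survives unchanged there, but the upper-bound construction needs a small extra case (an isolated SCC can be spliced into the cycle with one in-edge and one out-edge, contributing one to each of $\alpha$ and $\beta$, so the count is preserved). With that addendum your argument is complete and strictly stronger than the paper's.
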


\begin{proof}
The outline of the proof is the following: construct a sequential-pairing, so that joined with the associated maximum matching, it gives a cycle that spans $\mathcal G$. 

Consider a maximum matching in $\mathcal B(\Sigma^{so},\Sigma^{si},\mathcal E_{\Sigma^{so},\Sigma^{si}})$. Then, add the edges such that the source SCCs are reachable from the sink SCCs, equivalent to the sequential pairing $|\mathcal J,\mathcal K\rangle_M$, where $\mathcal J=\{j_1,\ldots,j_n\}$ corresponding to $\Sigma^{si}$ and $\mathcal K=\{k_1,\ldots,k_n\}$ corresponding to $\Sigma^{so}$.

(i) If $\alpha=\beta$ and there is a disjoint path from every $\Sigma^{so}_i$ to $\Sigma^{si}_i$, then the step above is enough to build the cycle, and the number of edges added is equal to $\alpha=\beta$. If there are indices $i\in\mathcal{I}=\{i_1,\ldots,i_{\alpha}\}$ for which there is no disjoint path from $\Sigma^{so}_i$ to $\Sigma^{si}_i$, then after adding the $\alpha-|\mathcal I|$ edges of the sequential pairing, collapse the strongly connected components into new SCCs. Although some edges from the matching considered in Definition~\ref{seqPairing} are missing, there are other edges from $\Sigma^{so}_{i_l}$ to $\Sigma^{si}_k$ and from $\Sigma^{so}_j$ to $\Sigma^{si}_{i_l}$, where $i_l\in\mathcal I$, $j\in\mathcal J \setminus \mathcal I$ and $k\in\mathcal K\ \setminus\mathcal I$. Subsequently, add the $|\mathcal I|$ edges that complete the matching, therefore, completing the cycle.

(ii) If $\alpha>\beta$, then after adding the $\beta$ edges corresponding to the sequential pairing, collapse the graph into a new, extended SCC and $\alpha-\beta$ source SCCs. For each source SCC, add an edge from the extended SCC to make the graph strongly connected. Thus, the number of edges added is equal to $\alpha$.

(iii) If $\alpha<\beta$, then, similarly to (ii), collapse the graph into a new, extended SCC and $\beta-\alpha$ sink SCCs. From each sink SCC, add an edge to the extended SCC to make the graph strongly connected. The number of edges added in this case is equal to $\beta$.
\end{proof}

\begin{remark}
An isolated strongly connected component is both source and sink, therefore, the existence of such components increase both $\alpha$ and $\beta$. Hence, Lemma~\ref{maxEdge} also holds for graphs with isolated SCCs.\hfill $\diamond$
\end{remark}

The condition that $\mathcal G$ is strongly connected is a mild condition since many results in literature like~\cite{Rao91,KhanJadbabaie} assume that the communication graph is strongly connected from its construction. 

In the second scenario, we assume that the communication digraph is strongly connected, so one is required to ``bring'' the \mbox{left-unmatched} vertices to a neighborhood of the sensors' states, as required by condition (ii) in Theorem~\ref{decstructobs}. Recall that there might exist several possible sets of \mbox{left-unmatched} vertices associated with maximum matchings. In particular, some of these \mbox{left-unmatched} vertices might be either the dynamical system's state vertices or sensors' state vertices. Since we have assumed that the original system is observable, we are guaranteed that there exists a maximum matching such that only $\mathcal U_L\subset\mathcal Z$. However, in order to assess if the condition $\mathcal U_L\subset \mathcal N_i^-$ holds for a sensor $i$, we have to explore the matchings that produce no \mbox{left-unmatched} system's state vertices.

\begin{theorem}\label{NP}
	The problem of determining the minimum number of communication links that ensure the observability of the pair $(\tilde{\mathbf A}(\mathcal G),\tilde{\mathbf C}_i)$, for every $i=1,\ldots m$ is NP-hard.
	\hfill $\diamond$
\end{theorem}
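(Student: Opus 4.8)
The plan is to prove NP-hardness by a polynomial-time reduction from the decision version of the \emph{set cover} problem, which is NP-complete. By Theorem~\ref{decstructobs} together with the sufficiency established in Theorem~\ref{sufficiency}, generic observability of $(\tilde{\mathbf A}(\mathcal G),\tilde{\mathbf C}_i)$ for all $i$ is equivalent to the two purely combinatorial conditions (i) and (ii) on the augmented digraph $\mathcal D(\tilde{\mathbf A}(\mathcal G))$. Hence it suffices to show that the problem of adding the fewest communication links so that (i) and (ii) hold for every $z\in\mathcal Z$ is NP-hard; the numerical realization of $\mathbf W(\mathcal G)$ plays no role in the decision problem.

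Given a set cover instance with universe $U=\{e_1,\ldots,e_n\}$, family $\mathcal F=\{S_1,\ldots,S_k\}$ with $\bigcup_j S_j=U$, and a budget $t$, I would construct in polynomial time a physical pair $(\mathbf A,\mathbf C)$, a sensor set $\mathcal Z$, and an initial communication graph $\mathcal G$ such that: the base system is observable (so the characterization applies); condition (i) already holds, or is cheaply and separately fixable, which is legitimate because strong connectivity is attainable in polynomial time by Lemma~\ref{maxEdge} and contributes a fixed, precomputable number of links; and the only obstruction to observability is condition (ii) for a designated family of sensors that plays the role of the universe $U$. The key design principle is that a \emph{single} candidate communication link, when added, perturbs the \emph{global} maximum-matching structure of $\mathcal B(\tilde{\mathbf A}(\mathcal G))$ so as to create a valid left-unmatched set $\mathcal U_L\subset\mathcal Z$ that simultaneously lies inside $\mathcal N_i^-$ for exactly the sensors indexed by one set $S_j\in\mathcal F$. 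Thus candidate links correspond to sets, the sensors whose condition (ii) must be secured correspond to elements, and satisfying condition (ii) at every sensor corresponds to covering $U$.

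With such a gadget in place, I would prove the equivalence in both directions. If $\mathcal F'\subseteq\mathcal F$ is a cover of size at most $t$, then adding the $|\mathcal F'|$ corresponding links yields, for every element-sensor, a maximum matching whose left-unmatched set is entirely contained in its in-neighborhood and disjoint from $\mathcal X$; together with the separately-secured condition (i), Theorem~\ref{decstructobs} gives observability at a cost of at most $t$ links up to the fixed additive offset coming from condition (i). Conversely, because each added link can contribute the coverage of only one set $S_j$, any link set achieving observability induces a cover of $U$ of the same cardinality, so a solution using at most $t$ links yields a cover of size at most $t$. Hence the minimum number of links equals, up to the known offset, the minimum cover size, and an efficient algorithm for the former would solve set cover.

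The main obstacle is the gadget construction and, in particular, certifying its coverage semantics: because adding a communication edge modifies the bipartite graph globally rather than locally, I must guarantee that each candidate edge creates a valid left-unmatched set landing in the in-neighborhoods of precisely the sensors of its associated set $S_j$ --- no fewer, for soundness of coverage, and no more, so that cheaper covers cannot be simulated --- while keeping $\mathcal U_L\cap\mathcal X=\emptyset$ and preserving base observability. Controlling this nonlocal effect on maximum matchings, and ruling out spurious low-cost solutions that exploit interactions between several added edges (for instance through the sequential-pairing mechanism of Definition~\ref{seqPairing} used to secure condition (i)), is the delicate and technically demanding part of the argument.
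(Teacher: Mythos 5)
Your proposal is a reduction \emph{plan}, not a reduction: the entire content of an NP-hardness proof by reduction lives in the gadget, and you never construct it. You state the key design principle --- that a single candidate communication link should perturb the global maximum-matching structure of $\mathcal B(\tilde{\mathbf A}(\mathcal G))$ so as to produce a valid left-unmatched set $\mathcal U_L\subset\mathcal N_i^-$ for exactly the sensors indexed by one set $S_j$ --- but you then concede in your final paragraph that certifying this coverage semantics, and ruling out cheaper solutions arising from interactions between several added edges, is unresolved. That concession is exactly where the proof would have to live. Note also that an added link $(i,j)$ directly enlarges only the in-neighborhood of the single sensor $i$; for it to ``cover'' a whole set $S_j$ of element-sensors it must act indirectly, by reshaping which maximum matchings (and hence which left-unmatched sets) exist, and nothing in the proposal demonstrates that this indirect effect can be made to align with an arbitrary set system while preserving observability of the base pair $(\mathbf A,\mathbf C)$ and condition (i). Without the gadget and its soundness/completeness argument, no NP-hardness has been established.

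For comparison, the paper's own proof takes a different and much shorter route: it identifies \emph{Problem~2} with the problem of adding the minimum number of edges to a digraph so that $k$ disjoint paths exist between given vertex subsets (this is the combinatorial content of condition (ii) of Theorem~\ref{decstructobs}, via the disjoint-path interpretation of left-unmatched vertices in Lemma~\ref{matMatPathCycle} and the rank argument in Theorem~\ref{sufficiency}), and then cites \cite{Kobayashi10} for the NP-hardness of that problem. Your set-cover strategy is not unreasonable in spirit --- and, if completed, could even yield inapproximability information the paper's citation-based argument does not --- but as written it leaves the essential step as an acknowledged open obstacle, whereas the paper discharges hardness by appeal to a known hard problem.
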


\begin{proof}

\emph{Problem~2} is NP-hard since it consists in determining the minimum number of edges that need to be added to a digraph such that there exist $k$ disjoint paths between given subsets of vertices. A reference to the variant of this problem where the subsets always have the same cardinality and the number of disjoint paths is constant can be found in \cite{Kobayashi10}, and this variant is NP-hard. 
\end{proof}

\begin{remark}\label{left-unmatched} 
In practice, the total number of \mbox{left-unmatched} vertices is often $0$, $1$ or $2$~\cite{siljak2007large}. In the latter case, it is required to find two disjoint paths from the \mbox{left-unmatched} vertices of the \mbox{state-output} digraph to the \mbox{in-neighboring} vertices of the sensor's state, where one can often be obtained through the \mbox{state-output} digraph and the other through the communication digraph, if this is assumed strongly connected. \hfill $\diamond$
\end{remark}

As a result of Remark~\ref{left-unmatched}, we are able to propose computationally tractable solutions to \emph{Problem~2}. We need to produce a strategy that penalizes the \mbox{left-unmatched} vertices that are the system's state vertices, and favors those states in the neighborhood of a given sensor's state. To this effect, we employ the MCMM routine, which can be found in~\cite{cormenBook}, for example. We define $\mathcal G^{\ast}$ as the union between the communication graph $\mathcal G$ and the set of the communication links that have to be added to guarantee observability of the state-output digraph. The procedure is detailed in Algorithm~\ref{alg}. 

\begin{algorithm}\label{alg}
\small

\textbf{Input:}~$\mathcal D(\tilde{\mathbf A}(\mathcal G))=(\mathcal V\equiv(\mathcal X\cup \mathcal Z),\mathcal E_{\mathcal V,\mathcal V}), \tilde{\mathbf C}_i,i = 1,\ldots,m$;\\
\textbf{Output:}~ The modified communication graph $\mathcal G^{\ast}$ and $\mathbf W(\mathcal G^{\ast})$;\\

\For {$i=1,\ldots,m$} {
\textbf{1.} Let $\mathcal S=\{s_j: j\in\{1,\ldots,m\}\setminus \mathbb N_i^-\}$ be the set of slack variables corresponding to virtual outputs of the sensors' state, with $\mathcal E_{\mathcal Z,\mathcal S}=\{(z_j,s_j): j\in\{1,\ldots, m\}\setminus \mathbb N_i^-\}, z_j \in\mathcal Z, s_j \in\mathcal S\}$;\\
\textbf{2.} Let $\mathcal B(\tilde{\mathbf A}, \tilde{\mathbf C}_i, S)=(\mathcal V,\mathcal V\cup \mathcal S,$ $\mathcal{E_{V,V}}\cup\mathcal E_{\mathcal V,\mathcal S})$ be the state-output bipartite graph of the system with slack variables;\\
\textbf{3.} Let the weight function of the edges be $w:\mathcal{E}\rightarrow \mathbb{R}^+$, where $\mathcal E$ is the set of all edges. Assign the following costs $w^i$ for sensor~$i$: ~(i) $w^i(e) = 0$, $e \in \mathcal{E_{V,V}}$, and~(ii) $w^i(e) = 1$, $e\in\mathcal{E_{Z,S}}$;\\
\textbf{4.} Define $(\mathcal B(\tilde{\mathbf A}, \tilde{\mathbf C}_i, S),w)$ as the weighted state-output bipartite graph;\\
\textbf{5.} Run the MCMM on $(\mathcal B(\tilde{\mathbf A}, \tilde{\mathbf C}_i, S),w)$ and obtain $M^{\ast}$;\\
\textbf{6.} For all $j=1,\ldots,m$ such that $\{(z_j,s_j)\in M^{\ast}\setminus \mathcal{E_{Z,S}}\}$,  add $(z_i,z_j)$ to $\mathcal G$.\\
}
Let $\mathcal G^{\ast}=\mathcal G$;\\
Find $\mathbf W(\mathcal G^{\ast})$ that yields the result in Theorem~\ref{almostAll}.
\caption{Finding the communication graph that ensures observability of $(\tilde{\mathbf A}(\mathcal G), \tilde{\mathbf C}_i)$, given strong connectivity of $\mathbf W(\mathcal G)$.}
\end{algorithm}

%(i) $w^i(e)=3$, $e\in \mathcal E_{\mathcal V\setminus z_i,\mathcal S}$;~(ii) $w^i(e) = 2$, $e = (z_i,s_i)$;~(iii) $w^i(e)=1$, $e=(v,z_i)$, with $v\in\mathcal N_i^-\setminus\{z_i\}$;~(iv) $w^i(e)=0$, $e\in \mathcal X\times \mathcal X$; and~(v) $w^i(e)=2$ otherwise. 
Intuitively, the introduction of the slack variables $\mathcal S$ in Algorithm~\ref{alg} is connected to adding the communication links that ensure $\mathcal U_L\subset \mathcal N_i^-$. Any slack variable $s_j\in\mathcal U_L(M^{\ast})$ suggests adding a communication link from sensor $z_j$ to sensor $z_i$. 

\begin{theorem}\label{thmAlg}
The procedure outlined in Algorithm~\ref{alg} is correct, i.e., its execution ensures the observability of the augmented pair $(\tilde{\mathbf A}(\mathcal G),\tilde{\mathbf C}_i)$, for any sub-collection of sensors considered. In addition, the complexity of the algorithm is $\mathcal O(m(n+2m)^3)$.\hfill $\diamond$
\end{theorem}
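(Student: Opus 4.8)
The plan is to reduce correctness to verifying the two conditions of Theorem~\ref{decstructobs} for every individual sensor $i$, and then invoke Theorem~\ref{almostAll} to pass from structural to genuine observability via a parametrization of $\mathbf W(\mathcal G^\ast)$. Observability from a single sensor is the strongest requirement: if $(\tilde{\mathbf A}(\mathcal G^\ast),\tilde{\mathbf C}_i)$ is observable for each $i$, then for any sub-collection $\mathcal I$ the stacked output $\{\tilde{\mathbf C}_i\}_{i\in\mathcal I}$ only adds rows to the observability test, so observability is inherited; hence it suffices to treat each $i$ separately. Since Algorithm~\ref{alg} only ever adds communication links and strong connectivity of $\mathcal G$ is assumed on input, condition~(i) of Theorem~\ref{decstructobs} holds throughout: observability of $(\mathbf A,\mathbf C)$ makes each sensor reachable from every state, and strong connectivity---preserved under edge addition---propagates this to all sensors. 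The burden is therefore condition~(ii), to be established for each $i$ after the edge additions of step~6.

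For condition~(ii) I would analyze the minimum cost maximum matching $M^\ast$ returned on $(\mathcal B(\tilde{\mathbf A},\tilde{\mathbf C}_i,S),w)$. The key observations are: (a) because $(\mathbf A,\mathbf C)$ is observable, $\mathcal B(\tilde{\mathbf A})$ admits a matching saturating $\mathcal X$, so a maximum matching of the slack-augmented graph saturates all state vertices through zero-cost edges; (b) each slack $s_j$ is defined only for $j\notin\mathbb N_i^-$ and is incident to $z_j$ alone, so slacks never compete with real edges, and a maximum matching can additionally saturate every sensor $z_j$ with $j\notin\mathbb N_i^-$, leaving unmatched only sensors already in $\mathbb N_i^-$; and (c) the $0/1$ cost forces the MCMM to match as many of the $j\notin\mathbb N_i^-$ sensors as possible through real edges, so the slack edges $(z_j,s_j)\in M^\ast$ single out exactly the minimal set $\mathcal J$ of such sensors that are forced to be left-unmatched. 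Adding $(z_i,z_j)$ for each $j\in\mathcal J$ moves precisely these sensors into the in-neighborhood $\mathcal N_i^-$ of $z_i$ in $\mathcal G^\ast$.

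The central step---and the main obstacle---is to show that these additions actually realize condition~(ii) on the modified graph. Let $M_{\mathrm{real}}=M^\ast\setminus\mathcal E_{\mathcal Z,\mathcal S}$. Viewed as a matching of the modified state bipartite graph $\mathcal B(\tilde{\mathbf A}(\mathcal G^\ast))$, its left-unmatched set consists exactly of the sensors of $\mathcal J$ (now in-neighbors of $z_i$ by construction) together with the unmatched sensors of $\mathbb N_i^-$, and, by~(a), no state vertex; hence it is contained in $\mathcal N_i^-\cap\mathcal Z$ and disjoint from $\mathcal X$. The remaining difficulty is that $M_{\mathrm{real}}$ need not be \emph{maximum} in $\mathcal B(\tilde{\mathbf A}(\mathcal G^\ast))$, since the newly added edges $(z_i,z_j)$ may create augmenting paths. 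Here I would invoke the invariance that augmenting a matching only \emph{removes} left-unmatched left-vertices---the two endpoints of an augmenting path become matched while every other matched vertex stays matched---so iterating augmentation until maximality produces a maximum matching whose left-unmatched set is a subset of that of $M_{\mathrm{real}}$. This subset is still contained in $\mathcal N_i^-\cap\mathcal Z$ and disjoint from $\mathcal X$, which is precisely condition~(ii). Combined with condition~(i) and Theorem~\ref{almostAll}, this yields observability of $(\tilde{\mathbf A}(\mathcal G^\ast),\tilde{\mathbf C}_i)$ for every $i$.

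For the complexity bound I would account for the work per sensor. Constructing $\mathcal B(\tilde{\mathbf A},\tilde{\mathbf C}_i,S)$ costs $\mathcal O((n+m)^2)$; the graph has $n+m$ left vertices and at most $n+2m$ right vertices, so padding to a square assignment problem and running the MCMM (Hungarian / min-cost assignment routine) takes $\mathcal O((n+2m)^3)$. The loop performs this once for each of the $m$ sensors, giving $\mathcal O(m(n+2m)^3)$ overall, and the final parametrization of $\mathbf W(\mathcal G^\ast)$ guaranteed by Theorem~\ref{almostAll} is dominated by this cost.
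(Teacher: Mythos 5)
Your proposal is correct and follows essentially the same route as the paper: analyze the minimum cost maximum matching on the slack-augmented bipartite graph, argue that the $0/1$ cost structure forces the slack edges in $M^{\ast}$ to identify exactly the sensors that must be made in-neighbors of $z_i$, verify both conditions of Theorem~\ref{decstructobs} for each $i$ separately, pass to numerical observability via Theorem~\ref{almostAll}, and charge one $\mathcal O((n+2m)^3)$ MCMM call per sensor. Your treatment is in fact slightly more careful than the paper's in two spots --- the explicit reduction from sub-collections to single sensors, and the observation that $M^{\ast}\setminus\mathcal E_{\mathcal Z,\mathcal S}$ need not be maximum in $\mathcal B(\tilde{\mathbf A}(\mathcal G^{\ast}))$ so one must use the fact that augmentation only shrinks the left-unmatched set --- whereas the paper argues informally through the path/cycle decomposition of Lemma~\ref{matMatPathCycle}; both versions share the same unproven assertion that the MCMM saturates all of $\mathcal X$.
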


\begin{proof}

A maximum matching of a bipartite graph decomposes the digraph into a disjoint union of a minimum collection of paths and cycles (see Lemma~\ref{matMatPathCycle}), where the \mbox{left-unmatched} vertices correspond to the ending vertices of the paths. The provided cost structure ensures that when a minimum weight maximum matching is computed, we use as many of the system's state vertices as possible for forming paths and cycles, since the total cost incurred by its edges is zero. Furthermore, the paths formed in the decomposition of the state digraph (that are not cycles) are extended by the edges in $\mathcal{E_{V,Z}}$ to the state-output digraph, with edges also of zero cost. Subsequently, there are two options. Option~(i): it is possible to continue adding edges of zero cost to those paths until we have the chance to include an edge that ends in a vertex in the neighborhood of sensor~$i$'s state, which minimizes the overall sum. Option~(ii): it is not possible to extend the path to contain the edges mentioned in~(i) and the end of the path is a slack variable, i.e., the sensor state corresponding to the slack variable is a \mbox{left-unmatched} vertex. If the mentioned sensor states are not in the in-neighborhood of sensor $i$, then we add edges in the communication graph (with associated weights in $\mathbf W(\mathcal G)$ and $w^i$) such that $\mathcal U_L\subset \mathcal N_i^-$. 

The computational complexity of Algorithm~\ref{alg} is that of the MCMM procedure, namely $\mathcal O(N^3)$, where $N$ is the maximum number of vertices between the partitions in the bipartite graph that gives the maximum matching~\cite{cormenBook}. In the present case, $N = n+m+p$ for each of the $m$ sensors, where $p = |\mathcal S|\leq m$. Since we are considering the worst case scenario, Algorithm~\ref{alg} has complexity $\mathcal O(m(n+2m)^3)$.
\end{proof}

In summary, the above strategy enables us to find the minimum number of \mbox{left-unmatched} vertices in the neighborhood of a sensor, hence, one can simply add communication links from the \mbox{left-unmatched} vertices that do not belong to the in-neighborhood to the state of sensor~$i$. Let us refer to the number of such \mbox{left-unmatched} vertices with respect to sensor~$i$ as $q_i$. Subsequently, we can simply perform the same procedure for each sensor, taking into consideration the edges added for the previous sensors, and we end up having a total of $q_1+\ldots + q_m$ additional links. This number is less than or equal to the number of communication links that would have to be added for each sensor, independently.

One can argue that this strategy is not necessarily optimal since the number of edges added is not minimal, and some of the edges added in application of the procedure for sensor~$i$ might be used in constructing a path in the MCMM in the procedure for sensor $i+1$. Therefore, one can ask if it is possible to solve this problem optimally. This problem is NP-hard, as shown in Theorem~\ref{NP}, but since, by Remark~\ref{left-unmatched}, the maximum number of \mbox{left-unmatched} vertices that can appear in practice is two, the difference between the number of edges added by Algorithm~\ref{alg} and the minimum number of edges that need to be added is likely to be negligible.

The communication graph $\mathcal G$ for the system in Figure~\ref{systemExample} is strongly connected, so the second condition of Theorem~\ref{decstructobs} holds. A maximum matching of minimum cost is illustrated in Figure~\ref{maxMatchingP2}. Following the algorithm for $i=1$, the communication link that has to be added is from sensor $4$ to sensor $1$. One parametrization for the communication matrix that ensures \mbox{distributed-decentralized} observability after running the algorithm for $i\in\{1,2,3,4\}$ is:
\[ \mathbf W(\mathcal{G}^{\ast}) =\left[
\begin{matrix}
0.9597    &     0  &  0.3404  &  0.5853\\
    0.2238 &   0.7513     &    0 &   0.2551\\
         0  &  0.5060 &   0.6991  &  0.8909\\
    0.9593    &     0  &  0.5472  &  0.1386
\end{matrix}
\right].
\]

\begin{center}
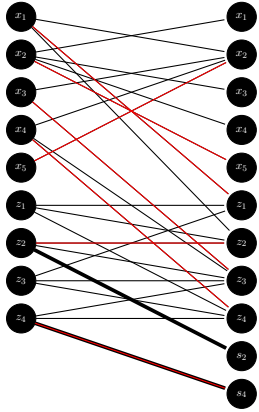
\begin{figure}
\begin{tikzpicture}[>=stealth',shorten >=1pt,node distance=0.5cm,on grid,initial/.style    ={}]

  \node[state,scale=0.4,fill]          (x1)                        {\textcolor{white}{$x_1$}};
  \node[state,scale=0.4,fill]          (x2) [below =of x1]    {\textcolor{white}{$x_2$}};
  \node[state,scale=0.4,fill]          (x3) [below =of x2]   {\textcolor{white}{$x_3$}};
  \node[state,scale=0.4,fill]          (x4) [below =of x3]    {\textcolor{white}{$x_4$}};
  \node[state,scale=0.4,fill]          (x5) [below =of x4]    {\textcolor{white}{$x_5$}};
  \node[state,scale=0.4,fill]          (z1) [below =of x5]    {\textcolor{white}{$z_1$}};
  \node[state,scale=0.4,fill]          (z2) [below =of z1]    {\textcolor{white}{$z_2$}};
  \node[state,scale=0.4,fill]          (z3) [below =of z2]    {\textcolor{white}{$z_3$}};
  \node[state,scale=0.4,fill]          (z4) [below =of z3]    {\textcolor{white}{$z_4$}}; 
  
  \node[state,scale=0.4,fill]          (x12) [right =of x1,xshift=6cm]  {\textcolor{white}{$x_1$}};
  \node[state,scale=0.4,fill]          (x22) [below =of x12]    {\textcolor{white}{$x_2$}};
  \node[state,scale=0.4,fill]          (x32) [below =of x22]   {\textcolor{white}{$x_3$}};
  \node[state,scale=0.4,fill]          (x42) [below =of x32]    {\textcolor{white}{$x_4$}};
  \node[state,scale=0.4,fill]          (x52) [below =of x42]    {\textcolor{white}{$x_5$}};
  \node[state,scale=0.4,fill]          (z12) [below =of x52]    {\textcolor{white}{$z_1$}};
  \node[state,scale=0.4,fill]          (z22) [below =of z12]    {\textcolor{white}{$z_2$}};
  \node[state,scale=0.4,fill]          (z32) [below =of z22]    {\textcolor{white}{$z_3$}};
  \node[state,scale=0.4,fill]          (z42) [below =of z32]    {\textcolor{white}{$z_4$}}; 
  \node[state,scale=0.4,fill]          (s22) [below =of z42]    {\textcolor{white}{$s_2$}};
  \node[state,scale=0.4,fill]          (s42) [below =of s22]    {\textcolor{white}{$s_4$}};
  
\tikzset{mystyle/.style={}} 

\path (x1)     edge [mystyle]    node   {} (x22)
	(x2)     edge [mystyle]    node   {} (x12)
	(x2)     edge [mystyle]    node   {} (x32)
	(x2)     edge [mystyle]    node   {} (x42)
        (x3)     edge [mystyle]    node   {} (x22)
	(x4)     edge [mystyle]    node   {} (x22)
	(x1)     edge [mystyle]    node   {} (z22)
	(x4)     edge [mystyle]    node   {} (z32)
	(z1)     edge [mystyle]    node   {} (z12)
	(z1)     edge [mystyle]    node   {} (z22)
	(z1)     edge [mystyle]    node   {} (z42)
	(z2)     edge [mystyle]    node   {} (z32)
	(z3)     edge [mystyle]    node   {} (z12)
	(z3)     edge [mystyle]    node   {} (z32)
	(z3)     edge [mystyle]    node   {} (z42)
	(z4)     edge [mystyle]    node   {} (z32)
	(z4)     edge [mystyle]    node   {} (z42)
	(z2)     edge [mystyle]    node   {} (s22)
	(x4)     edge [mystyle]    node   {} (z42)
	(x5)     edge [mystyle]    node   {} (x22)
	(x2)     edge [mystyle]    node   {} (x52)
	(x3)     edge [mystyle]    node   {} (z32)
	(x1)     edge [mystyle]    node   {} (z12)
	(z2)     edge [mystyle]    node   {} (z22)
	(z4)     edge [mystyle]    node   {} (s42)	;	

\tikzset{mystyle/.style={-,double=black}} 
\path 	(z4)     edge [mystyle]    node   {} (s42)	
	(z2)     edge [mystyle]    node   {} (s22);

\tikzset{mystyle/.style={,red}} 

\path (x4)     edge [mystyle]    node   {} (z42)
	(x5)     edge [mystyle]    node   {} (x22)
	(x2)     edge [mystyle]    node   {} (x52)
	(x3)     edge [mystyle]    node   {} (z32)
	(x1)     edge [mystyle]    node   {} (z12)
	(z2)     edge [mystyle]    node   {} (z22)
	(z4)     edge [mystyle]    node   {} (s42)	;
	
\end{tikzpicture}
  \caption{Minimum cost maximum matching in $(\mathcal B(\tilde{\mathbf A}(\mathcal G),\tilde{\mathbf C}_i,\mathcal S),w))$, for sensor 1, depicted by the collection of red edges. The bold edges have weight $1$, whereas all the rest have weight $0$.}
  \label{maxMatchingP2}
\end{figure}
\end{center}

\subsection{Solution to Problem~3}

The binary cost strategy designed for the solution of \emph{Problem~2} is expandable to variable costs corresponding to adding the communication links between the sensors. Suppose we are provided with a matrix of costs $\mathbf \Gamma$, with elements $\gamma_{ij}$ denoting the costs of adding a communication link from sensor $j$ to sensor $i$. Using these costs, we can adapt the previous strategy to the cost constrained problem of choosing the edges to be added such that the system is \mbox{distributed-decentralized} observable. The procedure implements the idea of including the costs $\gamma_{ij}$ in the weights for the bipartite representation $(\mathcal B(\tilde{\mathbf A}, \tilde{\mathbf C}_i, S),w)$.

\begin{theorem}\label{P3}
	Let $\{\gamma_{ij}\}_{i,j\in\{1,\ldots,m\}}$ be the communication cost, and consider the following weight structure: (i)~$w^i(e) = 0$, $e \in \mathcal E\cup \mathcal{E_{X,Y}}$, and (ii)~$w^i(e) = \gamma_{ij} $, $e=(z_j,s_j), z_j\in\mathcal Z\setminus \mathcal N_i^-, s_j \in \mathcal S$, for every $i,j = 1,\ldots,m$. Algorithm~\ref{alg} correctly computes and selects the communication links that have to be added to ensure the observability of the pair $(\tilde{\mathbf A}(\mathcal G),\tilde{\mathbf C}_i)$, when the cost of communication is imposed and the weight pattern $w^i$ for $i=1,\dots,m$ is considered. 
	\hfill $\diamond$
\end{theorem}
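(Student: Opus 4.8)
The plan is to reduce the claim to the correctness argument already established for Theorem~\ref{thmAlg} and then to upgrade the minimality conclusion from ``minimum number of links'' to ``minimum cost''. The crucial observation is that the structural part of the proof of Theorem~\ref{thmAlg}—namely, that a minimum cost maximum matching of $(\mathcal B(\tilde{\mathbf A}, \tilde{\mathbf C}_i, \mathcal S), w)$ uses the zero-weight edges (the existing communication links and the measurement edges in $\mathcal E \cup \mathcal{E_{X,Y}}$) preferentially to assemble paths and cycles, and resorts to a slack edge $(z_j, s_j)$ only when the corresponding path cannot be routed into the in-neighborhood $\mathcal N_i^-$ of sensor $i$—depends solely on the slack weights being strictly positive while the remaining weights are zero. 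Since each $\gamma_{ij}$ is a nonnegative communication cost and the structural edges retain zero weight, this preference ordering is unchanged. Consequently, for every sensor $i$, the set of slack variables appearing as left-unmatched vertices, hence the set of communication links $(z_i, z_j)$ added in Step~6, still renders $\mathcal U_L \subset \mathcal N_i^-$ with $\mathcal U_L \cap \mathcal X = \emptyset$, which by Theorem~\ref{decstructobs} yields observability of $(\tilde{\mathbf A}(\mathcal G), \tilde{\mathbf C}_i)$.

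First I would formalize the bijection between the slack edges selected by the matching and the links to be added. By the construction in Algorithm~\ref{alg}, the edge $(z_j, s_j)$ lies in $M^{\ast}$ exactly when the link $(z_i, z_j)$ is inserted, and under the weight pattern of Theorem~\ref{P3} that slack edge carries the weight $\gamma_{ij}$, the cost of that very link. Therefore the total weight of a maximum matching equals, up to the constant zero contribution of the structural edges, the total communication cost of the links it prescribes. Minimizing the matching weight is thus identical to minimizing the cost of the communication added for sensor $i$, so the links returned are cost optimal among all repairs acting on sensor $i$'s in-neighborhood.

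Then I would verify that imposing maximality on the matching does not conflict with cost minimization. The slacks guarantee that every sensor state in $\mathcal Z \setminus \mathcal N_i^-$ can always be matched, so the cardinality of a maximum matching is fixed, and the only remaining freedom is which sensor states are routed into $\mathcal N_i^-$ through zero-weight paths versus matched to their slacks. Among all maximum matchings, the MCMM selects the slack assignment whose aggregate cost $\sum_{(z_j,s_j)\in M^{\ast}} \gamma_{ij}$ is least; by Lemma~\ref{matMatPathCycle} this corresponds to a minimal path/cycle decomposition, so the number of slack endpoints is itself minimal and only the cheapest such endpoints are chosen. I would then note that the outer loop over $i=1,\ldots,m$ reuses the links already added for sensors $1,\ldots,i-1$ as zero-weight edges of $\mathcal G$ in subsequent iterations, so per-sensor correctness and per-sensor cost optimality are preserved.

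The main obstacle I anticipate is disentangling the cardinality of the matching from its cost: one must rule out that the variable weights induce a minimum cost matching that is cheap only because it leaves a vertex of $\mathcal X$ unmatched, thereby violating condition~(ii) of Theorem~\ref{decstructobs}. This is precisely prevented by the lexicographic nature of the MCMM, in which maximality is imposed first: the matching always covers the maximum number of vertices of $\mathcal V$, and since $(\mathbf A,\mathbf C)$ is observable this maximum leaves no vertex of $\mathcal X$ unmatched, after which the nonnegative weights $\gamma_{ij}$ merely arbitrate, among equally large matchings, which sensor states are sent to their slacks. Hence correctness is never traded for cost. Finally, as noted in Remark~\ref{left-unmatched}, the number of left-unmatched vertices is typically at most two, so this cost-aware selection coincides in practice with the globally cost optimal set of added links.
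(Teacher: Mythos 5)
Your proposal is correct and follows essentially the same route as the paper, whose proof is just the observation that the argument for Theorem~\ref{thmAlg} carries over once the unit slack weights are replaced by the costs $\gamma_{ij}$: the zero-weight structural edges are still preferred in the MCMM (maximality is imposed before cost, so no vertex of $\mathcal X$ is sacrificed), the selected slack edges still yield $\mathcal U_L\subset\mathcal N_i^-$ and hence observability via Theorem~\ref{decstructobs}, and the matching weight now equals the communication cost of the prescribed links. One small correction: under variable costs the MCMM minimizes the \emph{aggregate cost} of the slack edges rather than their number --- the paper says exactly this (``the algorithm will try to minimize the cost of the links and not their number'') --- so your assertion that ``the number of slack endpoints is itself minimal'' fails in general (two cheap links can beat one expensive one), though this does not affect the correctness or cost-optimality claims of the theorem.
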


The proof is similar to the proof of Theorem~\ref{thmAlg}. As before, the algorithm for finding the MCMM will select the maximum number of edges of cost zero. Following the design of the new weighting pattern, when the communication links already available are not enough to satisfy $\mathcal U_L \subset \mathcal N_i^-$, the algorithm will try to minimize the cost of the links and not their number. 

In addition, observations regarding the minimum number of communication links can be made, in a similar manner as in the end of Section~\ref{SubsP2}.

%{\color{red} In practice one should be able to solve the two scenarios simultaneously, but since one of the scenarios is already computationally challenging, then it is acceptable to do an approximation. Then in the conclusions and future research we can say that we will look in some optimality guarantees, and alternative formulations that can be polynomially solvable -- which I believe to be somewhat possible when we address $P_3$ in the future :-)}

\section{ILLUSTRATIVE EXAMPLE}\label{illustrativeexample}

In this section, we use the concepts and methodology proposed in Section~\ref{mainresults} in the context of brain dynamics. We consider a linearized brain dynamics associated with data obtained from electroencephalography (EEG)~\cite{Solovey2012}, and whose structure is induced by the brain structural connectivity obtained via magnetic resonance imaging~\cite{deReus2013397}. More specifically, consider the brain partitioned in $34$ different regions~\cite{Fair09}, and its connectivity captured by the digraph $\mathcal S=(\mathcal V=\{1,\ldots,34\}, \mathcal E)$, where each $v\in\mathcal V$ labels a different region and the edges in $\mathcal E$ capture the existence of white-matter tracks between two different regions, as illustrated in Figure~\ref{brain}. In addition, the activity in the different regions and between these is fluctuating, being more pronounced during the execution of certain tasks. Therefore, following~\cite{Solovey2012}, under the assumption that the activity is induced by the brain structure~\cite{Gu15}, the state evolution can be captured by considering the first-order autoregressive model:
\[
\mathbf x[k+1]=\mathbf A(\mathcal S)\mathbf x[k] + \mathbf \epsilon[k], \quad k=0,1,\ldots,
\]
where $\mathbf x\in\mathbb{R}^{34}$ is the state of the different regions, and $[\mathbf A_k(\mathcal S)]_{i,j}=0$ if $(j,i)\notin \mathcal E$, else, a scalar to be determined, and $\epsilon_k$ is the dynamics error. These scalars can be determined using least-square approach with sparsity inducing methods relying on basis-pursuit optimization~\cite{Figueiredo}, when some measurements collected by EEG are considered. Nonetheless, we assume these to be zero-one as described in~\cite{Fair09}, where additionally, the diagonal entries were set to zero and only $30\%$ of the \mbox{off-diagonal} entries were considered. The digraph $\mathcal D(\mathbf A)$ is composed of four SCCs as shown in Figure~\ref{brainGraph}, with two source SCCs containing the states numbered $21$ and $23$ and one sink SCC containing the state numbered $22$. In addition, there exists a maximum matching of the state bipartite graph with unmatched-vertices $\mathcal U_L=\{21,22\}$; hence, measuring these two variables suffices to ensure structural observability.
\begin{figure}
    \includegraphics[width=0.6\textwidth]{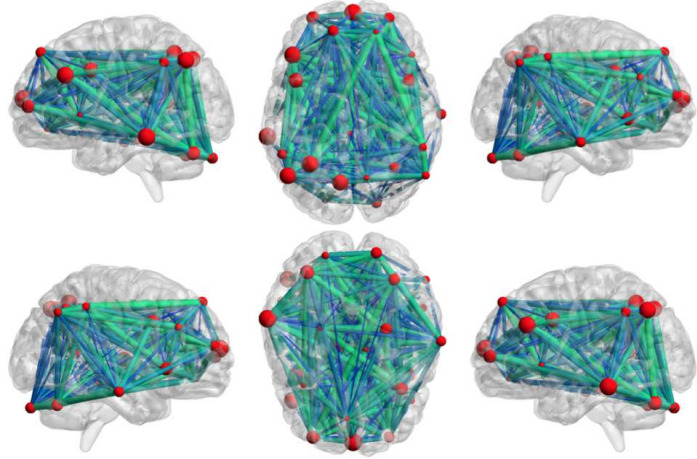}
  \caption{Full view of the brain with the 34 regions represented as nodes, and the white matter tracks represented as edges, obtained using \cite{BrainNet}.}      
   \label{brain}
\end{figure}

For the simplicity of the model (and reproducibility of the  results), we assume that an EEG sensor captures the behavior of a single region in a linearized manner, i.e.,
$y_k=\mathbb I^{\mathcal J}_{34}x_k$,
where $\mathcal J=\{21,22,23,24,29\}$ correspond to sensors deployed in the following scalp locations  \{AF3, AF4, T7, T8, Pz\} (see details of these locations in~\cite{Koessler09}), and consistent with the locations of the EMOTIV Insight~\cite{emotivInsight}. In addition, the system is observable -- in particular, we notice that considering that other regions were simultaneously measured by the EEG sensors would not compromise the observability under the present dynamics. Subsequently, considering the design topology of this device, we assume the following communication graph (through wiring) holds, where $\textbf{adj}(\mathcal G)$ is the adjacency matrix of graph $\mathcal G$:
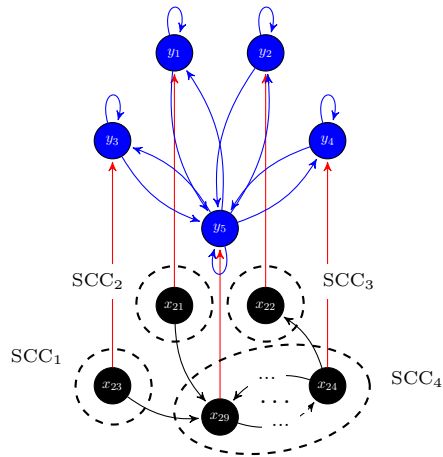
\begin{figure}
\begin{tikzpicture}[>=stealth',shorten >=1pt,node distance=2cm,on grid,initial/.style    ={}]

  \node[state,scale=0.5,fill=blue]         (y5)                        {\textcolor{white}{$y_5$}};
  \node[state,scale=0.5,fill=blue]         (y3) [above left =of y5,yshift=-0.5cm]    {\textcolor{white}{$y_3$}};
  \node[state,scale=0.5,fill=blue]          (y1) [above right =of y3,xshift=-1.2cm,yshift=-0.5cm]    {\textcolor{white}{$y_1$}};
  \node[state,scale=0.5,fill=blue]         (y4) [above right =of y5,yshift=-0.5cm]    {\textcolor{white}{$y_4$}};
  \node[state,scale=0.5,fill=blue]         (y2) [above left =of y4,xshift=1.2cm,yshift=-0.5cm]    {\textcolor{white}{$y_2$}};
  \node[state,scale=0.5,fill]          (x21) [below =of y1,yshift=-2.7cm]    {\textcolor{white}{$x_{21}$}};
  \node[state,scale=0.5,fill]          (x22) [below =of y2,yshift=-2.7cm]    {\textcolor{white}{$x_{22}$}};
  \node[state,scale=0.5,fill]          (x23) [below =of y3,yshift=-2.5cm]    {\textcolor{white}{$x_{23}$}};
  \node[state,scale=0.5,fill]          (x24) [below =of y4,yshift=-2.5cm]    {\textcolor{white}{$x_{24}$}}; 
  \node[state,scale=0.5,fill]          (x29) [below =of y5,yshift=-1cm]    {\textcolor{white}{$x_{29}$}}; 

\tikzset{mystyle/.style={->,red}} 
\path (x21)     edge [mystyle]    node   {} (y1)
	(x22)     edge [mystyle]    node   {} (y2)
	(x23)     edge [mystyle]    node   {} (y3)
	(x24)     edge [mystyle]    node   {} (y4)
	(x29)     edge [mystyle]    node   {} (y5);

  \path[->,blue]          (y1)  edge   [bend right=20]   node {} (y5);
  \path[->,blue]          (y5)  edge   [bend right=20]   node {} (y1);
  \path[->,blue]          (y2)  edge   [bend right=20]   node {} (y5);
  \path[->,blue]          (y5)  edge   [bend right=20]   node {} (y2);
  \path[->,blue]          (y3)  edge   [bend right=20]   node {} (y5);
  \path[->,blue]          (y5)  edge   [bend right=20]   node {} (y3);
  \path[->,blue]          (y4)  edge   [bend right=20]   node {} (y5);
  \path[->,blue]          (y5)  edge   [bend right=20]   node {} (y4);
  \path[->,blue]          (y1)  edge   [loop above]   node {} (y1);
  \path[->,blue]          (y2)  edge   [loop above]    node {} (y2);
  \path[->,blue]          (y3)  edge   [loop above]    node {} (y3);
  \path[->,blue]          (y4)  edge   [loop above]    node {} (y4);
  \path[->,blue]          (y5)  edge   [loop below]    node {} (y5);

  \path[->]          (x21)  edge   [bend right=20]   node {} (x29);
  \path[->]          (x23)  edge   [bend right=20]   node {} (x29);
  \path[->]          (x24)  edge   [bend right=20]   node {} (x22);
  \tikzset{every node/.style={fill=white}} 

  \path[->]          (x24)  edge   [bend right=35]   node {\tiny{...}} (x29);
  \path[->]         (x29)  edge   [bend right=35]   node {\tiny{...}} (x24);

\draw[thick,dashed] (-1.4,-2.1) circle (0.5cm);
\draw[thick,dashed] (-0.6,-1) circle (0.5cm);
\draw[thick,dashed] (0.6,-1) circle (0.5cm);
\draw[thick,dashed,rotate=10] (0.28,-2.35) ellipse (1.3cm and 0.7cm);

\node[text width=0.7cm] at (-2.4,-1.7) {\tiny SCC\textsubscript{1}};
\node[text width=0.7cm] at (-1.6,-0.7) {\tiny SCC\textsubscript{2}};
\node[text width=0.7cm] at (1.7,-0.7) {\tiny SCC\textsubscript{3}};
\node[text width=0.7cm] at (2.6,-2) {\tiny SCC\textsubscript{4}};
\node[text width=0.5cm] at (0.77,-2.3) {\small\dots};
\end{tikzpicture}
  \label{brainGraph}
 \caption{DAG representation of the 34 regions of the brain, along with the sensors deployed for measurement of the system. The strongly connected component labeled as SCC\textsubscript{4} contains all the other state vertices not depicted. }
\end{figure}

$\textbf{adj}(\mathcal G)= \left[ \begin{matrix} 1 & 0 & 0 & 0 & 1\\ 0 & 1 & 0 & 0 & 1\\ 0 & 0 & 1 & 0 & 1\\ 0 & 0 & 0 & 1 & 1\\ 1 & 1 & 1 & 1 & 1 \end{matrix} \right]$. 

Because the system is observable, it follows that it is also structurally observable, and, in particular, there are distinct sensors measuring the locations $\{21,22\}$. 
%The \mbox{left-unmatched} vertices with respect to a maximum matching in the state digraph are the states numbered $\{21\}$ and $\{22\}$, and the sink SCC contains state $\{23\}$. 
Furthermore, notice that there are direct paths from every vertex in the digraph to every sensor, which implies that condition (i) of Theorem~\ref{decstructobs} is satisfied. However, condition (ii) is not fulfilled, and therefore, we need to address \emph{Problem~3}, where we consider a unit of cost $c$ and a communication cost structure in accordance to the distance between the sensors, given by:

$\mathbf \Gamma= \left[ \begin{matrix} 0 & 2c & 3c & 4c & 4c\\ 2c & 0 & 4c & 3c & 4c\\ 3c & 4c & 0 & 5c & 3c\\ 4c & 3c & 5c & 0 & 3c\\ 4c & 4c & 3c & 3c & 0 \end{matrix} \right]$.

Running Algorithm~\ref{alg} with the given costs, as in Theorem~\ref{P3}, we obtain that there is only one communication link from sensor $2$ to sensor $4$ that has to be added, while incurring the minimum cost. The resulting communication scheme that guarantees observability of the system with respect to each sensor is $\mathcal G^{\ast}$, with a total cost of $3c$, and a possible communication protocol $\mathbf W(\mathcal G^{\ast})$, given as following: 

$\textbf{adj}(\mathcal G^{\ast})= \left[ \begin{matrix} 1 & 0 & 0 & 0 & 1\\ 0 & 1 & 0 & 0 & 1\\ 0 & 0 & 1 & 0 & 1\\ 0 & 1 & 0 & 1 & 1\\ 1 & 1 & 1 & 1 & 1 \end{matrix} \right]$ and $\mathbf W(\mathcal G^{\ast}) = \left[ \begin{matrix} 0.77 &  0 & 0 & 0 & 0.48\\ 0 & 0.43 & 0 & 0 & 0.44\\ 0 & 0 & 0.30 & 0 & 0.50\\ 0 & 0.51 & 0 & 1.00 & 0.81\\ 1.00 & 0.79 & 0.64 & 1.00 & 0.37 \end{matrix} \right]$. 

In conclusion, the system $(\tilde{\mathbf A}(\mathcal G),\tilde{\mathbf C}_i)$ as in~\eqref{dynAugmented}-\eqref{outputAugmented} is observable for all $i = 1,\ldots, m$.

\section{CONCLUSIONS AND FURTHER RESEARCH}\label{conclusions}

In this paper, we addressed the problem of exact \mbox{distributed-decentralized} retrieval of the states of an LTI system, from any subset of sensors of the given system. We considered that the sensors have storing and communicating capabilities, and that they are inter-linked according to a communication graph. Our approach involved associating states to the sensors and constructing an augmented system, for which we provided the necessary and sufficient conditions to ensure observability from any sensor. Furthermore, we addressed the problem of re-designing the communication graph to ensure  \mbox{distributed-decentralized} observability when the previous conditions are not readily fulfilled and proved that it is NP-hard. We devised a suboptimal solution that can be attained in polynomial time such that the observability requirements are satisfied. Moreover, we proposed an extension to the previous strategy that takes into consideration the variable costs of adding communication links between the sensors. We explored the trade-offs between the different aspects of the control-communication-computation paradigm for the present setup that employed more communication and a lighter computational load.

%From the point of view of control-communication-computation paradigm, this approach incurs a heavier communication load, and less computation. explores the trade-offs between the different aspects, and that different setups in control strategies would require to further explore the implications of the study done by us

Future research includes identifying subclasses of the problem in which the solution described in the present paper is optimal and examining suboptimality guarantees. Moreover, we notice that, in some scenarios, adding communication capabilities may be more prohibitive than adding more memory to the sensors. We will further investigate the implications of having multi-dimensional sensors' states, and address the possibility of relying on the same communication graph and providing different communication schemes such that a \mbox{distributed-decentralized} scheme is feasible.

%\addtolength{\textheight}{-12cm}   % This command serves to balance the column lengths
                                  % on the last page of the document manually. It shortens
                                  % the textheight of the last page by a suitable amount.
                                  % This command does not take effect until the next page
                                  % so it should come on the page before the last. Make
                                  % sure that you do not shorten the textheight too much.

%%%%%%%%%%%%%%%%%%%%%%%%%%%%%%%%%%%%%%%%%%%%%%%%%%%%%%%%%%%%%%%%%%%%%%%%%%%%%%%%

%%%%%%%%%%%%%%%%%%%%%%%%%%%%%%%%%%%%%%%%%%%%%%%%%%%%%%%%%%%%%%%%%%%%%%%%%%%%%%%%

%%%%%%%%%%%%%%%%%%%%%%%%%%%%%%%%%%%%%%%%%%%%%%%%%%%%%%%%%%%%%%%%%%%%%%%%%%%%%%%%

\footnotesize

\bibliographystyle{IEEEtran}
\bibliography{acc2016}
%\bibliography{CDC_Arxiv.bbl}

\end{document}